\newcolumntype{Y}[1]{>{\centering\tiny}m{\dimexpr#1\textwidth-2\tabcolsep-1\arrayrulewidth\relax}}
\newcolumntype{Z}{@{}m{0pt}@{}} 
\renewcommand{\P}{\mathcal{P}}
\newcommand{\Q}{\mathcal{Q}}
\newcommand{\R}{\mathcal{R}}
\DeclareMathOperator{\Acc}{Acc}
\DeclareMathOperator{\Crit}{Crit}
\DeclareMathOperator{\Post}{Post}
\DeclareMathOperator{\Card}{Card}
\theoremstyle{plain}
\newtheorem{theorem}{Theorem}
\newtheorem{lemma}{Lemma}
\newtheorem{proposition}[lemma]{Proposition}
\newtheorem{corollary}[lemma]{Corollary}
\newtheorem*{theoremrecapitulate}{Theorem~\ref{th:main}'}
\theoremstyle{definition}
\newtheorem{definition}[lemma]{Definition}
\newtheorem{remark}[lemma]{Remark}
\begin{document}

\title[Finitely Generated Interval Maps]{Constant Slope Models for Finitely Generated Maps}
\author{Samuel Roth}
\address{Silesian University in Opava,\\Na Rybni\v{c}ku 626/1,\\746 01 Opava, Czech Republic}
\email{samuel.roth@math.slu.cz}
\subjclass[2010]{Primary: 37E05, Secondary: 37B40}
\keywords{interval map, constant slope, topological entropy, countable Markov shift}

\begin{abstract}
We study countably monotone and Markov interval maps. We establish sufficient conditions for uniqueness of a conjugate map of constant slope. We explain how global window perturbation can be used to generate a large class of maps satisfying these conditions.
\end{abstract}
\maketitle

\section{Introduction}
\subsection{Motivation}\strut\\
From the topological conjugacy class of a topologically mixing \textbf{piecewise monotone} interval map $f$, we can choose out one distinguished representative, namely, the constant slope model%
\footnote{i.e., a conjugate map which is piecewise affine with slope $\pm\lambda$ on each piece, for some $\lambda>0$.}%
. It exists, it is unique%
\footnote{ignoring the trivial reversal of orientation: if $g$ is a constant slope model, then so is $x\mapsto-g(-x)$.}%
, and the logarithm of its slope tells us the topological entropy of $f$ \cite{AM,P}.

The situation is more complicated for topologically mixing \textbf{countably monotone} interval maps.  Constant slope models need not exist.  They need not be unique.  And the exponential of the topological entropy gives only a lower bound for the constant slope \cite{BB, BS, MR}.

In this paper we introduce \textbf{finitely generated} maps as a compromise -- they are countably monotone, but offer much of the simplicity of piecewise monotone maps.
We will prove (Theorem~\ref{th:main}) that a finitely generated map has at most one constant slope model, and if it exists, then the logarithm of the slope gives the topological entropy.

\subsection{Outline}\strut\\
The central arguments of this paper may be summarized as follows.  In Section~\ref{sec:defs} we define global window perturbations and finitely generated maps. In Section~\ref{sec:dyn} we start with two well-known properties of a piecewise monotone topologically mixing interval map:
\begin{enumerate}[(i)]
\item\label{it:leo} the map is locally eventually onto (Lemma~\ref{lem:pmleo}), and
\item\label{it:rep} all periodic orbits are weakly repelling (Lemma~\ref{lem:perrepel})%
\footnote{In fact, they are repelling, but we need the weaker version of this property defined in \S\ref{sec:defs} to get the equivalence (ii)$\iff$(ii').}.
\end{enumerate}

Next, we give an interpretation of these two properties in symbolic dynamics. If a topologically mixing interval map admits a countable Markov partition with only finitely many accumulation points, then properties~\eqref{it:leo} and~\eqref{it:rep} are equivalent to the following two properties:
\begin{enumerate}[(i)$\iff$(i')]
\item\label{it:leo'} the transition matrix is locally eventually onto (Lemma~\ref{lem:leoleo}), and
\item\label{it:rome} the transition graph has a finite Rome (Theorem~\ref{th:rome}).
\end{enumerate}

Global window perturbation preserves properties~(i') and~(ii'), and thus finitely generated maps inherit these properties (Corollaries~\ref{cor:fgleo} and~\ref{cor:fgrep}).

In Section~\ref{sec:eig} we show the significance of properties~(i') and~(ii'), as regards nonnegative eigenvectors of a transition matrix:
\begin{enumerate}[(i')]
\item $\implies$ all nonnegative eigenvectors are summable (Proposition~\ref{prop:leosum}), and
\item $\implies$ there is at most one nonnegative eigenvector (Theorem~\ref{th:path} and Corollary~\ref{cor:fgunique}).
\end{enumerate}

In Section~\ref{sec:main} we exploit the link between nonnegative eigenvectors and constant slope models in order to prove our main result, Theorem~\ref{th:main}.

In Section~\ref{sec:exa} we develop Theorem~\ref{th:mixing}, which offers a sufficient criterion for a map to be topologically mixing. Then we apply the theorem to construct an illustrative example of a finitely generated map.

\section{Definitions}\label{sec:defs}

\subsection{Interval Maps}~\\
We work in the space $C([0,1])$ of continuous maps $f:[0,1]\to[0,1]$.  The \emph{critical points} of a map $f\in C([0,1])$ are
\begin{equation*}
\Crit(f):=\{0,1\}\cup\{x \, | \, f \text{ is not strictly monotone on any neighborhood of }x\}.
\end{equation*}
$\Crit(f)$ is always closed; $f$ is \emph{piecewise monotone} if $\Crit(f)$ is finite. The \emph{postcritical set} of $f$ is
\begin{equation*}
\Post(f)=\overline{\cup_{n=0}^\infty f^n(\Crit f)}.
\end{equation*}
A set $P$ is \emph{invariant} for $f$ if $f(P)\subset P$. $\Post(f)$ is always closed and invariant.

\subsection{Markov Partitions}\strut\\
For interval maps $f$ with postcritical set countable (i.e., finite or countably infinite), we use two kinds of \emph{Markov partitions}:
\begin{itemize}
\item A \emph{taut partition} is a countable, closed, invariant set $P\supseteq\Crit(f)$.
\item A \emph{slack partition} is a countable, closed, invariant set $P\supseteq f(\Crit(f))$.
\end{itemize}
Of course, the minimal taut partition is $\Post(f)$ itself, but we will often use larger (finer) partition sets.  Note also that a slack partition can always be refined into a taut partition by adjoining $\Crit(f)$.

Given a Markov partition $P$, the \emph{partition intervals} $I,J,\ldots$ are the connected components of $[0,1]\setminus P$. The set of partition intervals will be denoted by the corresponding script letter $\P$, and we will use the term \emph{partition} to refer to either $P$ or $\P$, depending on the need.

\subsection{Global Window Perturbations and Finitely Generated Maps}\strut\\[-1em]

\begin{definition}\label{def:window}
Fix a map $f$ with a countable postcritical set, and choose a taut Markov partition $P$.  We say that $g$ is a \emph{global window perturbation}%
\footnote{Slack partitions and global window perturbations were first defined in~\cite{BB} in a slightly more restrictive sense -- for example, they allowed only finitely many critical points in each partition interval.}
of $(f,P)$ if
\begin{enumerate}[(i)]
\item\label{it:slack} $P$ is a slack partition for $g$, and
\item\label{it:match} $f(\overline{I})=g(\overline{I})$ for each $I\in\P$.
\end{enumerate}
\end{definition}

\begin{definition}\label{def:fg}
A \emph{finitely generated map} $g$ is a global window perturbation of a pair $(f,P)$, where $f$ has finite critical set, $P$ has finite accumulation set, and $f$, $g$ are both topologically mixing.
\end{definition}

%

\section{Dynamic Properties of Finitely Generated Maps}\label{sec:dyn}

This section records two well-known properties of topologically mixing piecewise monotone maps, and then explores to what extent these properties are inherited by finitely generated maps.

\subsection{Two properties of piecewise monotone maps}\strut\\
We distinguish two notions of repelling behavior for a finite set $P$ invariant under $f$.  We say that $P$ is
\begin{itemize}
\item \emph{repelling} if for some open set $U\supset P$, for all $x\in U\setminus P$, for some $n\in\mathbb{N}$, $f^n(x)\notin U$.
\item \emph{weakly repelling} if for some open set $U\supset P$, for all $x\in U$, for some $n\in\mathbb{N}$, $f^n(x)\in P$ or $f^n(x)\notin U$.
\end{itemize}

\begin{lemma}\label{lem:pmleo}
\cite[Proposition 2.34]{Ru} A piecewise monotone topologically mixing interval map is locally eventually onto.
\end{lemma}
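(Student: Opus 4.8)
The plan is to prove that a piecewise monotone topologically mixing interval map $f$ is locally eventually onto (leo): for every nonempty open interval $J\subseteq[0,1]$ there is an $n$ with $f^n(J)=[0,1]$. The natural strategy is to track how the images $f^n(J)$ grow. Each $f^n(J)$ is a connected subinterval of $[0,1]$, since continuous images of intervals are intervals. The goal is to show these images eventually fill the whole space, and the engine driving the growth will be topological mixing combined with finiteness of the critical set.

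First I would observe that $f$ piecewise monotone means $\Crit(f)$ is finite, say of cardinality $k$, so on each of the finitely many intervals between consecutive critical points $f$ is a homeomorphism onto its image. The key elementary fact is that if an interval $K$ contains no critical point in its interior, then $f$ is monotone on $K$ and hence $f|_K$ is injective, so $f$ does not ``fold'' $K$. I would then argue that the lengths (or, more robustly, the ``combinatorial size'') of the iterates $f^n(J)$ cannot shrink to nothing: if $f^n(J)$ never grew to contain a critical point, then $f$ would be monotone on each $f^n(J)$, making $f^n|_J$ injective for all $n$; but an injective-forever orbit of intervals is incompatible with mixing, which forces overlaps $f^n(J)\cap J\ne\emptyset$ for large $n$ and thus recurrence that a strictly monotone homeomorphism of a growing interval cannot sustain. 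Hence some iterate $f^{n_0}(J)$ must contain a point of $\Crit(f)$ in its interior.

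Once an iterate contains a critical point $c$ in its interior, $f$ has a strict local extremum at $c$, so $f(f^{n_0}(J))$ contains a one-sided neighborhood of $f(c)$ on which the image genuinely overlaps itself; more usefully, I would set up a monovariant: let $N(n)$ count how many critical points lie in the interior of $f^n(J)$, and show that once $N$ becomes positive it cannot drop back to $0$ permanently, so that infinitely often the image interval straddles a critical value and therefore has an interior that is ``anchored'' and can only expand on subsequent mixing-driven iterations. The cleanest packaging is: using mixing, choose $n$ so that $f^n(J)$ meets every one of the finitely many basic partition intervals determined by $\Crit(f)$; then $f^n(J)$ must contain at least one full basic interval $[c_i,c_{i+1}]$, because a subinterval meeting two adjacent critical points' cells and the cell between them, by connectedness, contains the middle cell entirely. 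I would make this precise by invoking mixing against the finite open cover by the open cells $(c_i,c_{i+1})$.

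The final step is to promote ``$f^m(J)$ contains a full basic interval $[c_i,c_{i+1}]$'' to ``some further iterate is all of $[0,1]$.'' Here I would use mixing one more time: the images $f^j([c_i,c_{i+1}])$ must eventually meet both endpoints $0$ and $1$, and since each image is an interval containing a full cell, a short induction shows the images grow to cover successively more cells until they exhaust $[0,1]$; concretely, if $f^p([c_i,c_{i+1}])$ is a proper subinterval, mixing guarantees some iterate meets the complement, and connectedness of the image forces it to cover an additional cell, so after at most $k$ such expansions the image is $[0,1]$. The main obstacle, and the step requiring the most care, is the first transition: ruling out the possibility that $f^n(J)$ stays monotone and ``drifts'' forever without ever capturing a critical point in its interior. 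This is exactly where topological mixing is indispensable and where a naive length-expansion argument fails (slopes could be near $1$); the rigorous resolution is the covering argument via mixing against the finite cell cover, which is why piecewise monotonicity (finiteness of $\Crit(f)$, giving a finite cover) is essential to the proof.
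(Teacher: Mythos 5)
The paper offers no proof of this lemma at all — it is quoted from Ruette \cite[Proposition 2.34]{Ru} — so your attempt can only be judged on its own merits, and unfortunately it has two genuine gaps, both in the final ``promotion'' step. The first half is sound: since mixing gives intersection with a fixed open set for \emph{all} sufficiently large $n$, you may indeed take the maximum of finitely many such thresholds and get a single $n$ for which the interval $f^n(J)$ meets every lap of $f$; when there are at least three laps this traps a full closed lap inside $f^n(J)$ by connectedness (the one- and two-lap cases need separate treatment, e.g.\ for the tent map an interval meeting both laps need not contain either one — compare the case split in the proof of Lemma~\ref{lem:leoleo}). The trouble begins afterwards. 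You assert that ``the images $f^j([c_i,c_{i+1}])$ must eventually meet both endpoints $0$ and $1$,'' but $\{0\}$ and $\{1\}$ are not open sets, and topological mixing says nothing about hitting them. Mixing alone — with no piecewise monotonicity whatsoever — already yields that for each fixed $\epsilon>0$ and all large $n$, $f^n(J)$ meets both $(0,\epsilon)$ and $(1-\epsilon,1)$ and hence contains $[\epsilon,1-\epsilon]$. The entire content of the lemma is the capture of the endpoints, so that $f^n(J)$ \emph{equals} $[0,1]$, and your proposal simply asserts this. A correct argument must use finiteness of $\Crit(f)$ here: for instance, if no point of $(0,1)$ were ever mapped to $0$, then (using that a piecewise monotone map has only finitely many local minimum values) one manufactures a closed invariant proper subinterval $K\subsetneq[0,1]$ with nonempty interior, and mixing fails for $U=\mathrm{int}\,K$ against any open $V\subseteq[0,1]\setminus K$; so some interior point maps onto $0$ (similarly $1$) after finitely many steps, and these preimages are what an iterate of $J$ must swallow.

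The second gap is the expansion induction itself: the monovariant ``number of full cells covered'' is not monotone. If $f^p(J)\supseteq[c_i,c_{i+1}]$, then $f^{p+1}(J)\supseteq f([c_i,c_{i+1}])$, an interval with endpoints $f(c_i)$ and $f(c_{i+1})$ — and nothing forces these to be critical points, so this image can land strictly inside a single lap and contain \emph{no} full cell: the count can drop to zero. Moreover the sequence $f^n(J)$ is not nested, so ``some later iterate meets the complement'' cannot be combined with ``the image still contains the previously covered cells'' to conclude that a new cell is added; your ``after at most $k$ such expansions'' therefore does not close. The standard repair is to abandon the cell-counting induction entirely: use mixing directly to get $f^n(J)\supseteq[\epsilon,1-\epsilon]$ for all large $n$, choose $\epsilon$ small enough that this middle interval contains the finitely many interior preimages of $0$ and $1$ produced above, and apply finitely many further iterates to capture both endpoints simultaneously.
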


\begin{lemma}\label{lem:perrepel}
A piecewise monotone topologically mixing interval map has all periodic orbits repelling.
\end{lemma}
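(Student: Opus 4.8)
The plan is to derive everything from local eventual onto-ness (Lemma~\ref{lem:pmleo}) together with the monotone branch structure of a piecewise monotone map. First I would reduce to a fixed point: if $O=\{p_0,\dots,p_{k-1}\}$ is a periodic orbit of period $k$, then each $p_i$ is a fixed point of $g:=f^k$, and $g$ is again piecewise monotone and topologically mixing (hence leo), since these properties pass to iterates. Passing further to $g^2$ if necessary, I may assume that on a small one-sided neighborhood of the chosen fixed point $p$ the map is monotone. Since each power of $f$ has only finitely many fixed points (an interval of fixed points would violate transitivity), I can fix $\delta>0$ so small that $p$ is the only fixed point of the relevant power $H\in\{f^{2k},f^{4k}\}$ in $(p-\delta,p+\delta)$ and $H$ is monotone on each of $(p-\delta,p)$ and $(p,p+\delta)$.

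The heart of the argument is a dichotomy on each side, say the right. If $H$ is increasing on $[p,p+\delta]$ with $H(y)\le y$ for some $y\in(p,p+\delta]$, then $K:=[p,y]$ satisfies $H(K)=[H(p),H(y)]=[p,H(y)]\subseteq K$, so $K$ is forward invariant under $H$ and $H^m(K)\subseteq K\subsetneq[0,1]$ for all $m$. This contradicts Lemma~\ref{lem:pmleo}: leo supplies $N$ with $f^N(\operatorname{int}K)=[0,1]$, and since $f$ is onto the same holds for every larger exponent, in particular for some multiple of the power defining $H$. Hence no such $y$ exists, i.e.\ $H(y)>y$ throughout $(p,p+\delta]$, and the increasing, monotone orbit of any right-neighbor of $p$ is forced strictly away from $p$ until it exits $(p,p+\delta)$; thus $p$ is repelling from the right. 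The symmetric statement holds on the left.

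The remaining local case is a turning point of $H$ at $p$, which (as one checks) persists under taking further powers and so cannot be removed by the reduction above. Here $H$ folds a full neighborhood of $p$ to one side, say $H(x)<p$ for all $x\ne p$ near $p$; a point on either side is mapped after one step into the left neighborhood $(p-\delta,p)$, where the increasing-branch dichotomy just proved shows the orbit is repelled away from $p$ and exits. So turning points are repelling as well. Finally I would assemble the one-point conclusions into a statement about the orbit $O$: choosing disjoint neighborhoods $(p_i-\delta,p_i+\delta)$ and invoking the expansion established above, every point other than those on $O$ eventually leaves their union, which is the neighborhood witnessing that $O$ is repelling.

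I expect the main obstacle to be the bookkeeping around orientation: orientation-reversal and turning points prevent a naive one-step monotonicity argument, and it is exactly here that one must pass to even powers and treat the folding case separately. The second delicate point is making the contradiction with leo airtight, namely producing a genuinely forward-invariant nondegenerate subinterval from the merely one-sided non-expanding hypothesis and matching the exponents in $f^N$ and $H^m$.
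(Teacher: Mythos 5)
Your proposal is correct, but it takes a genuinely different route from the paper's. The paper disposes of the lemma in three lines by an interval-dragging trick: choose $\epsilon$ so that $U=\bigcup_i(p_i-\epsilon,p_i+\epsilon)$ meets $\Crit(f)$ only in the orbit $P$ itself; if some $x\in U$ has its entire orbit trapped in $U$, then monotonicity on each side of each $p_i$ forces the whole nondegenerate interval joining $x$ to the nearest $p_i$ to have all its images trapped as well, and a nondegenerate interval whose images forever avoid a fixed open set contradicts mixing directly. This sidesteps everything you had to manage by hand: no reduction to fixed points of $f^{2k}$, no orientation bookkeeping, no separate treatment of folds, and no appeal to Lemma~\ref{lem:pmleo}. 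Your route instead localizes at a fixed point of an even iterate $H$ and runs a branch dichotomy: either a one-sided non-expansion $H(y)\le y$ yields a forward-invariant nondegenerate interval $K=[p,y]$ with $H^m(K)\subseteq K$, which you correctly kill with the locally eventually onto property (your exponent-matching via surjectivity of $f$ is exactly right), or strict displacement $H(y)>y$ forces monotone escape; the fold case then reduces to the one-sided case after one step. What your approach buys is more explicit local information -- monotone escape rates from one-sided neighborhoods, and, implicitly, the fact that a trapped point off $P$ can never land on $P$ (its $H$-orbit is strictly displaced), a point the definition of repelling quietly requires and which the paper's proof handles instead through strict monotonicity of $f^n$ on the dragged interval. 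Two soft spots in your write-up, neither fatal: first, your justification that each power of $f$ has finitely many fixed points (``an interval of fixed points would violate transitivity'') only excludes an interval of fixed points; the correct quick argument is that two fixed points on a single increasing branch bound an invariant interval -- but your own dichotomy makes the finiteness premise superfluous, since $H(y)\le y$ at any $y$, fixed or not, already produces the invariant interval. Second, the final assembly from one-point repulsion for $H$ to repulsion of the orbit under $f$ needs the standard continuity shrinking $f(U_i)\cap U\subseteq U_{i+1\bmod k}$, so that a trapped $f$-orbit cycles through the components in order and its $f^{2k}$-orbit remains in the single component where your local dichotomy applies; you wave at this as ``bookkeeping,'' and it is routine, but it is the step that makes the one-point conclusions sum to a statement about $O$ and should be stated.
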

Rather than finding Lemma~\ref{lem:perrepel} in the literature, we simply give a short proof.
\begin{proof}
Fix a periodic orbit $P=\{p_1,\cdots,p_n\}$ of the map $f$ and fix $\epsilon>0$ small enough that $U=\cup_{i=1}^n (p_i-\epsilon,p_i+\epsilon)$ has the following property: $U\cap\Crit(f)\subseteq P$.  If $x\in U$ has its orbit contained in $U$, then by monotonicity, so does the whole interval connecting $x$ to the nearest point $p_i\in P$. This contradicts the mixing property. 
\end{proof}

\subsection{The effect of global window perturbation in symbolic dynamics}\strut\\
The following lemma allows us to do symbolic dynamics with slack partitions.

\begin{lemma}\label{lem:components}
Suppose $f\in C([0,1])$ has a (taut or slack) Markov partition $P$. Then for each $J\in\P$, $f^{-1}(J)$ has finitely many connected components. Each of them is contained in a partition interval and is mapped homeomorphically onto $J$.
\end{lemma}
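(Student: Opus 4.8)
The plan is to fix a partition interval $J\in\P$ and to analyse the open set $f^{-1}(J)$ one connected component at a time. Since $J$ is open and $f$ is continuous, $f^{-1}(J)$ is open in $[0,1]$, hence a disjoint union of its connected components, each a (relatively open) interval. I will show that an arbitrary component $K$ lies inside a single partition interval and is carried homeomorphically onto $J$, and then that only finitely many components can occur.

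First I would \emph{locate} $K$. The invariance $f(P)\subseteq P$ forbids $K$ from meeting $P$: if $x\in K\cap P$ then $f(x)\in P$, yet $f(x)\in J$ and $J\cap P=\emptyset$. Hence $K$ avoids $P$ and so is contained in one partition interval. The same idea rules out critical points in $K$: in the taut case $\Crit(f)\subseteq P$ already gives $K\cap\Crit(f)=\emptyset$, while in the slack case a point $x\in K\cap\Crit(f)$ would have $f(x)\in f(\Crit(f))\subseteq P$, again contradicting $f(x)\in J$. Because $K$ contains no critical point, $f$ is locally strictly monotone at each of its points; the locally-increasing and locally-decreasing loci are open, disjoint, and cover the connected set $K$, so one of them is all of $K$, and $f$ is strictly monotone on $K$.

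Next I would prove surjectivity onto $J=(a,b)$. A continuous strictly monotone map is a homeomorphism onto its image, so it remains to see that $f(K)=J$ rather than a proper subinterval. Writing $K=(c,d)$, the endpoints $c,d$ are boundary points of a component of the open set $f^{-1}(J)$ and therefore do not lie in $f^{-1}(J)$; by continuity $f(c),f(d)\in\overline{J}\setminus J\subseteq\{a,b\}$. Monotonicity then forces $\{f(c),f(d)\}=\{a,b\}$, whence $f(\overline{K})=\overline{J}$ by the intermediate value theorem and $f(K)=J$. The only place the endpoints could misbehave is at $0$ or $1$; but $\{0,1\}\subseteq\Crit(f)$ gives $f(0),f(1)\in f(\Crit(f))\subseteq P$ in either case, so $0,1\notin f^{-1}(J)$ and every component is a genuine open subinterval of $(0,1)$, keeping the endpoint analysis valid.

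The step I expect to be the real obstacle is finiteness, precisely because in the slack case $f$ need not be globally piecewise monotone (its critical set may be countably infinite), so I cannot simply count monotone laps. Instead I would argue by compactness. If there were infinitely many components, they would be pairwise disjoint subintervals of $[0,1]$, so their total length is at most $1$, their lengths tend to $0$, and (after passing to a subsequence) both endpoints converge to a common point $x^*\in[0,1]$. Each component carries $f$ onto all of $J=(a,b)$, so every neighborhood of $x^*$ contains an entire component and hence meets every level set $f^{-1}(t)$ with $t\in(a,b)$. Then $f$ oscillates by at least $b-a>0$ near $x^*$, contradicting continuity of $f$ at $x^*$. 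This forces finitely many components and completes the proof.
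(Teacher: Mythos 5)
Your proof is correct and takes essentially the same route as the paper's: the invariance of $P$ (together with $\Crit(f)\subseteq P$ or $f(\Crit(f))\subseteq P$) locates each component inside a partition interval away from the critical set, the endpoint analysis plus monotonicity gives $f(K)=J$, and your finiteness argument is the paper's compactness argument in lightly different dress (the paper tracks preimages of two fixed points $y,y'\in J$ converging to a common limit, whereas you let shrinking components accumulate at $x^*$ and contradict continuity there). Your explicit handling of the endpoints $0,1$ and of strict monotonicity via the open increasing/decreasing loci merely fills in details the paper leaves implicit.
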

\begin{proof}
Let $U$ be a component of $f^{-1}(J)$. Then $f(U)$, being a subset of $J$, does not meet $P$. It follows that $U\cap P=\emptyset$ and $U\cap\Crit(f)=\emptyset$, i.e. $U$ is contained in a partition interval and is mapped homeomorphically onto its image. Maximality of $U$ as a connected subset of $f^{-1}(J)$ implies that the endpoints of $U$ both map to endpoints of $J$, which together with monotonicity, gives surjectivity $f(U)=J$.

Now suppose there is an infinite sequence $U_1, U_2, \ldots$ of distinct connected components of $f^{-1}(J)$. Fix two distinct points $y,y'\in J$, and for each $i\in\mathbb{N}$ find their respective preimages $x_i,x'_i\in U_i$. Disjointness of the open intervals $U_i$ implies that their lengths are decreasing to zero, so $|x_i-x'_i|\to 0$. By compactness, we may find a subsequence $x_{i_j}$ converging to some point $x\in[0,1]$. Then also $x'_{i_j}$ converges to $x$, so continuity gives both $f(x)=y$ and $f(x)=y'$, a contradiction.
%
\end{proof}

Given a map $f$ with a (taut or slack) Markov partition $P$, the corresponding \emph{transition matrix} $A=A(f,P)$ is the countable matrix with rows and columns indexed by $\P$ and entries
\begin{equation*}
A_{IJ} = \Card (I \cap f^{-1}(x)), \quad x\in J;
\end{equation*}
by Lemma~\ref{lem:components}, its entries are well-defined (independent of $x$) natural numbers (for a taut partition they are zeros and ones).  Likewise, we define the \emph{transition graph} $\Gamma=\Gamma(f,P)$ as the countable directed graph with vertices indexed by $\P$ and with $A_{IJ}$ distinct arrows pointing from $I$ to $J$.  We will write $I\to J$ to express the existence of at least one arrow pointing from $I$ to $J$.

\begin{lemma}\label{lem:same}
Let $g$ be a global window perturbation of $(f,P)$. Then 
\begin{equation*}
A(f,P)_{IJ}=0 \iff A(g,P)_{IJ}=0.
\end{equation*}
Similarly,
\begin{equation*}
I\to J \textnormal{  in } \Gamma(f,P) \iff I\to J\textnormal{  in }\Gamma(g,P).
\end{equation*}
\end{lemma}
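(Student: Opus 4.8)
The plan is to reduce both stated equivalences to a single geometric fact: for a partition interval $I$, the intervals $J$ receiving an arrow from $I$ are precisely those contained in the image $h(\overline{I})$, and since a global window perturbation preserves these images (condition~\eqref{it:match} of Definition~\ref{def:window}), $f$ and $g$ must produce exactly the same arrows. Note first that $A(h,P)_{IJ}>0$ is literally the statement $I\to J$ in $\Gamma(h,P)$, so the two displayed equivalences are one and the same; I would prove only the first. By Lemma~\ref{lem:components}, each connected component of $h^{-1}(J)$ sits inside a single partition interval, so $A(h,P)_{IJ}>0$ means exactly that some point of $J$ has an $h$-preimage lying in $I$, i.e. $h(I)\cap J\neq\emptyset$.

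The key step is to upgrade ``$h(I)$ meets $J$'' to ``$h(\overline{I})\supseteq J$'', which hinges on showing that the endpoints of the interval $h(\overline{I})$ lie in $P$. For $f$ this is routine: since $P$ is taut, $\Crit(f)\subseteq P$, so $I$ contains no critical point and $f$ is strictly monotone on $I$; hence $f(\overline{I})$ is a closed interval whose endpoints are the $f$-images of the endpoints of $\overline{I}$, and those images lie in $P$ by invariance. For $g$ this is the crux, and I expect it to be the main obstacle, since $P$ is only a slack partition and $g$ may well have critical points inside $I$. The remedy is that the maximum and minimum of $g$ over $\overline{I}$ are each attained either at an endpoint of $\overline{I}$ (which lies in $P$, so its $g$-image lies in $P$ by invariance) or at a point of $\Crit(g)\cap\overline{I}$ (whose $g$-image lies in $P$ because $P\supseteq g(\Crit(g))$). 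Either way $g(\overline{I})=[m,M]$ with $m,M\in P$. This is the one place where the asymmetry between the taut and slack conditions, and both defining inclusions, are genuinely used.

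Once $h(\overline{I})=[m,M]$ has endpoints in $P$, I would finish with an elementary observation about the partition interval $J$: as a component of $[0,1]\setminus P$, it is an open interval that is disjoint from $P$ yet whose endpoints lie in $P$. Any such interval that meets the closed interval $[m,M]$ (whose endpoints are in $P$) must in fact lie inside $(m,M)$, because $J$ cannot contain $m$ or $M$. Hence $h(I)\cap J\neq\emptyset \iff J\subseteq h(\overline{I})$, giving
\begin{equation*}
A(h,P)_{IJ}>0 \iff J\subseteq h(\overline{I}) \qquad (h\in\{f,g\}).
\end{equation*}
Since condition~\eqref{it:match} of Definition~\ref{def:window} provides $f(\overline{I})=g(\overline{I})$, the right-hand side is identical for $f$ and $g$, so $A(f,P)_{IJ}=0\iff A(g,P)_{IJ}=0$, and the corresponding equivalence for arrows follows at once.
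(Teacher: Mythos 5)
Your proof is correct and follows the same route as the paper, which simply observes that the lemma is immediate from condition~(ii) of Definition~\ref{def:window}; you have supplied the routine verification that the paper leaves implicit, namely the characterization $A(h,P)_{IJ}>0\iff J\subseteq h(\overline{I})$, with the taut condition handling $f$ and the slack condition (critical values in $P$, plus invariance) putting the endpoints of $g(\overline{I})$ in $P$. Your identification of the slackness of $P$ for $g$ as the point where the argument could fail for an arbitrary perturbation matching the windows is exactly right, since without it $A(g,P)$ would not even be well-defined via Lemma~\ref{lem:components}.
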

\begin{proof}
This follows immediately from Definition~\ref{def:window}~\eqref{it:match}.
\end{proof}

Thus, global window perturbation preserves the location of zeros in the transition matrix, but may replace some 1's with larger natural numbers.  Similarly, both transition graphs have the same paths (as sequences of vertices) -- global window perturbation preserves the location of arrows but may change their multiplicities. This simple observation has profound consequences for the dynamics of finitely generated maps.

\subsection{Finitely generated maps inherit the locally eventually onto property}

Suppose $P$ is a (taut or slack) Markov partition for an interval map $f$. It's easy to see that if $f$ is topologically mixing, then $A=A(f,P)$ is
\begin{itemize}
\item \emph{irreducible}: for all indices $I,J$ there is $n>0$ with $A^n_{IJ}>0$, and
\item \emph{aperiodic}: for each index $I$, $\gcd\{n|A^n_{IJ}>0\}=1$.
\end{itemize}
If $f$ has the stronger property of being locally eventually onto, then $A(f,P)$ is also
\begin{itemize}
\item \emph{locally eventually onto}: for each index $I$ for some $n$ for all $J$, $A^n_{IJ}>0$.
\end{itemize}

The converse also holds:

\begin{lemma}\label{lem:leoleo}
Let $f\in C([0,1])$ be topologically mixing with a Markov partition $P$. Then $f$ is locally eventually onto if and only if $A(f,P)$ is locally eventually onto.
\end{lemma}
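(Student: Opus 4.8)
The plan is to build a dictionary between positive powers of $A$ and the geometry of forward images, dispatch the easy implication directly, and reduce the hard implication to a single geometric ``growth'' statement.

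First I would establish, for every $n\ge 1$ and all partition intervals $I,J\in\P$, the equivalence
\[
A^n_{IJ}>0 \iff J\subseteq f^n(I) \iff f^n(I)\cap J\neq\emptyset.
\]
For $n=1$ this is immediate from Lemma~\ref{lem:components}: a point of $f(I)\cap J$ lifts to a point of $I$ lying in a component of $f^{-1}(J)$, that component is contained in $I$ and maps onto all of $J$, so $J\subseteq f(I)$ and $A_{IJ}\ge1$. The forward implication for general $n$ follows by expanding $A^n$ as a sum over length-$n$ walks $I=K_0\to K_1\to\cdots\to K_n=J$ and noting that $K_{i+1}\subseteq f(K_i)$ gives $K_i\subseteq f^i(I)$ by induction. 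The reverse implication I would also prove by induction on $n$: given $x\in J\cap f^{n+1}(I)$, write $x=f^{n}(z)$ with $z\in f(I)$; invariance of $P$ forces $z\notin P$ (otherwise $x=f^n(z)\in P$, contradicting $x\in J$), so $z$ lies in a partition interval $K$ with $A_{IK}>0$ and $x\in J\cap f^n(K)$, and I apply the inductive hypothesis.

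Granting this dictionary, the implication ``$f$ leo $\Rightarrow A$ leo'' is immediate: each partition interval $I$ is open, so leo yields $n$ with $f^n(I)=[0,1]$, whence $J\subseteq f^n(I)$, i.e. $A^n_{IJ}>0$, for every $J$. For the converse I would fix a nonempty open $U$ and, after shrinking it to a nondegenerate open subinterval (which only strengthens the conclusion), reduce everything to the following growth claim: some forward image $f^k(U)$ contains a full closed partition interval $\overline J$. Given this, the dictionary applied to $J$ under $A$~leo produces $n$ with $K\subseteq f^n(J)$ for every $K$, so $f^n(\overline J)\supseteq[0,1]\setminus P$; since $[0,1]\setminus P$ is dense (as $P$ is countable) and $f^n(\overline J)$ is compact, $f^n(\overline J)=[0,1]$, and therefore $f^{k+n}(U)=[0,1]$.

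The growth claim is the main obstacle, and it is where topological mixing is used. I would argue by contradiction, assuming no $f^n(U)$ contains a closed partition interval. The key observation is that then $\operatorname{int} f^n(U)$ contains at most one point of $P$: if it contained two points $q_1<q_2$ of $P$, then choosing $x\in(q_1,q_2)\setminus P$ (possible since $P$ is countable) and taking the partition interval $(c,d)\ni x$ gives $q_1\le c<d\le q_2$, so $\overline{(c,d)}\subseteq\operatorname{int} f^n(U)$ is a closed partition interval inside $f^n(U)$, a contradiction. Hence each $f^n(U)$ lies in $\overline{I}\cup\overline{I'}$ for at most two adjacent partition intervals, and such a set can meet an open partition interval $J$ only when $J\in\{I,I'\}$, because $\overline{I}\cap J\subseteq\partial I\cap J\subseteq P\cap J=\emptyset$ for $J\neq I$. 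Fixing three pairwise distinct partition intervals and using mixing to find $N$ with $f^n(U)$ meeting all three for every $n\ge N$ then forces $f^n(U)$ to meet three distinct partition intervals, which is impossible. This contradiction proves the growth claim and completes the argument.
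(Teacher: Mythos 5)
Your proof is correct and complete in the main case, but it has a genuine gap: the final contradiction in the growth claim begins by ``fixing three pairwise distinct partition intervals,'' and nothing guarantees that three partition intervals exist. The lemma's hypotheses allow $\Card\P\in\{1,2\}$, and such cases really occur: the full tent map with the taut partition $P=\{0,\tfrac12,1\}$ has exactly two partition intervals. The case is not vacuous either, since a finite irreducible aperiodic matrix is automatically locally eventually onto, so the implication $(\Leftarrow)$ there amounts to the full statement ``mixing $\Rightarrow$ locally eventually onto'' with no extra hypothesis to lean on. And your mechanism genuinely fails there: with $P=\{0,\tfrac12,1\}$, the set $f^n(U)$ can meet both partition intervals while its interior contains only the single point $\tfrac12$ of $P$; mixing forces $f^n(U)$ to approach $0$ and $1$ but not to attain them, so the assumption ``no $f^n(U)$ contains a closed partition interval'' survives your argument unscathed. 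Showing that images of small intervals eventually swallow a whole $\overline{J}$ in this situation is exactly the content of the piecewise monotone theory, which is why the paper treats it as a separate case: when there are only one or two partition intervals, Lemma~\ref{lem:components} implies $f$ is piecewise monotone (finitely many partition intervals, each preimage with finitely many monotone branches), and Lemma~\ref{lem:pmleo} then gives locally eventually onto. Adding this case analysis closes the gap.

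Everything else checks out. Your dictionary $A^n_{IJ}>0\iff J\subseteq f^n(I)\iff f^n(I)\cap J\neq\emptyset$ is correctly proved from Lemma~\ref{lem:components} and invariance of $P$, the forward implication is fine, and the reduction via ``$f^n(\overline J)$ is compact and contains the dense set $[0,1]\setminus P$'' matches the paper's closing step. In the main case your route differs mildly from the paper's: the paper chooses a partition interval $I$ with neither $0$ nor $1$ as an endpoint, uses mixing to make $f^n(U)$ meet $(0,\min I)$ and $(\max I,1)$, and gets $\overline I\subseteq f^n(U)$ by the intermediate value theorem; you instead show that if no image contains a closed partition interval then $\operatorname{int}f^n(U)$ traps at most one point of $P$, so $f^n(U)$ meets at most two partition intervals, contradicting mixing against three fixed intervals. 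Both arguments need at least three partition intervals (the paper's, to have a ``middle'' interval), so the missing case is precisely the one the paper's second paragraph handles.
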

\begin{proof}
We prove only the implication $(\Leftarrow)$. First, suppose there exist at least three partition intervals. Fix an open set $U\subset[0,1]$. Choose a partition interval $I$ with neither $0$ nor $1$ as an endpoint. Find $n$ such that $f^n(U)$ meets both of the open sets $(0,\min I)$ and $(\max I,1)$. By the intermediate value theorem, $f^n(U)\supset \overline{I}$. Now find $n'$ such that $A(f,P)^{n'}_{IJ}\geq1$ for all $J\in\P$. Then $f^{n'}(\overline{I})$ is a closed set containing every partition interval. Therefore $f^{n+n'}(U)=[0,1]$, as required.

Next suppose there exist only one or two partition intervals. Then by Lemma~\ref{lem:components}, $f$ is piecewise monotone, so by Lemma~\ref{lem:pmleo}, $f$ is locally eventually onto.
\end{proof}
\begin{corollary}\label{cor:fgleo}
A finitely generated map is locally eventually onto.
\end{corollary}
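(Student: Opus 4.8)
The plan is to chain together the results already established, using the transition matrix as a bridge between the original map $f$ and its perturbation $g$. First I would observe that since $f$ has finite critical set it is piecewise monotone, so that Lemma~\ref{lem:pmleo} applies: the topologically mixing map $f$ is locally eventually onto. Applying the forward implication of Lemma~\ref{lem:leoleo} to $f$ together with its Markov partition $P$, I conclude that the transition matrix $A(f,P)$ is locally eventually onto.

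Next I would transfer this property across the perturbation. The locally eventually onto property of a transition matrix---that for each index $I$ there is an $n$ with $A^n_{IJ}>0$ for every $J$---is a statement purely about which powers carry positive entries, equivalently about the existence of directed paths of a prescribed length in the transition graph. By Lemma~\ref{lem:same}, $g$ shares the arrow structure of $f$: we have $A(f,P)_{IJ}=0\iff A(g,P)_{IJ}=0$, and the two transition graphs possess exactly the same paths as sequences of vertices. Hence $A(g,P)^n_{IJ}>0$ precisely when $A(f,P)^n_{IJ}>0$, and so $A(g,P)$ inherits the locally eventually onto property.

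Finally, since $g$ is topologically mixing by hypothesis and $P$ is a (slack) Markov partition for $g$ by Definition~\ref{def:window}, I would apply the reverse implication of Lemma~\ref{lem:leoleo} to $g$: because $A(g,P)$ is locally eventually onto, so is $g$, which is the assertion.

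The argument is essentially a diagram chase, so I do not anticipate a serious obstacle. The one point deserving a moment of care is the middle step---justifying that the locally eventually onto property of the matrix is invariant under global window perturbation. This holds because that property is detected entirely by the zero/nonzero pattern of the powers $A^n$, while Lemma~\ref{lem:same} guarantees that $f$ and $g$ agree on this pattern, even though the perturbation may inflate some entries of the matrix from $1$ to larger natural numbers.
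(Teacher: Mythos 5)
Your proof is correct and follows essentially the same route as the paper: Lemma~\ref{lem:pmleo} for $f$, transfer of the locally eventually onto property of the transition matrix via Lemma~\ref{lem:same}, and the converse direction of Lemma~\ref{lem:leoleo} applied to $g$. You have merely spelled out the steps (including the observation that positivity of entries of $A^n$ depends only on the zero pattern of $A$) that the paper's terser proof leaves implicit.
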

\begin{proof}
Let $g$ be finitely generated from $(f,P)$. By Lemma~\ref{lem:same}, $A(g,P)$ is locally eventually onto if and only if $A(f,P)$ is. The result follows in light of Lemma~\ref{lem:pmleo}.
\end{proof}

\subsection{Finitely generated maps inherit (weakly) repelling periodic orbits.}\strut\\
We offer a symbolic characterization of the property of having weakly repelling periodic orbits.

\begin{definition}
Given a countable directed graph $\Gamma$, a \emph{Rome} is any subset $\R$ of the vertices of $\Gamma$ such that every path $I_0\to I_1\to \cdots$ in $\Gamma$ visits $\R$.
\end{definition}

\begin{theorem}\label{th:rome}
Suppose $f\in C([0,1])$ is topologically mixing and has a (taut or slack) Markov partition $P$ with a finite accumulation set. Then the transition graph $\Gamma(f,P)$ contains a finite Rome if and only if every periodic orbit of $f$ is weakly repelling.
\end{theorem}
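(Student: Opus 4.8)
The plan is to express everything in terms of orbits near the finite accumulation set $Q:=P'$ (the finitely many accumulation points of $P$), and to lean on its forward-invariance. First I would record two facts. Since $f$ is topologically mixing it cannot be constant on any interval, so $f(\overline I)$ is always a nondegenerate interval; as $P$ has empty interior this forces every vertex of $\Gamma(f,P)$ to have an outgoing arrow, and via Lemma~\ref{lem:components} every infinite path $I_0\to I_1\to\cdots$ is realised by the nonempty nested compact set $\bigcap_j f^{-j}(\overline{I_j})$. Thus infinite paths correspond to orbits with $f^j(x)\in\overline{I_j}$. Second, I claim $f(Q)\subseteq Q$: for $q\in Q$ choose partition points $p_n\to q$, so $f(p_n)\to f(q)\in\overline{f(P)}\subseteq P$; if $f(q)$ were isolated in $P$ then the images of the shrinking intervals abutting $q$ would have both their endpoints and, since $P\supseteq f(\Crit f)$, their interior critical values landing in $P$ and accumulating at $f(q)$, forcing $f(q)\in P'=Q$. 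If $P$ is finite then $Q=\emptyset$, $\Gamma$ is finite (with trivial Rome $\P$), $f$ is piecewise monotone, and Lemma~\ref{lem:perrepel} makes every orbit repelling; so the statement holds trivially and I assume $Q\neq\emptyset$.

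Next I would localise the Rome near $Q$. Writing $N_\epsilon(Q)$ for the $\epsilon$-neighbourhood, set $\R_\epsilon:=\{I\in\P:\overline I\not\subseteq N_\epsilon(Q)\}$. Because disjoint partition intervals shrink and can accumulate only at $Q$, each $\R_\epsilon$ is finite. Moreover any finite Rome $\R$ satisfies $\R\subseteq\R_\epsilon$ once $\epsilon<\min_{I\in\R}\max_{x\in\overline I}\mathrm{dist}(x,Q)$, and supersets of Romes are Romes; hence a finite Rome exists iff $\R_\epsilon$ is a Rome for some (equivalently all small) $\epsilon$. I would then route the target equivalence through the condition $(\star)$: there is $\epsilon>0$ such that every $x$ with $f^n(x)\in N_\epsilon(Q)$ for all $n\ge0$ eventually satisfies $f^n(x)\in Q$, and prove ``finite Rome'' $\iff(\star)\iff$ ``all periodic orbits weakly repelling''.

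The first equivalence is the path--orbit dictionary: $\R_\epsilon$ fails to be a Rome exactly when some orbit stays forever in funnel intervals $\overline I\subseteq N_\epsilon(Q)$, which is precisely the failure of $(\star)$. For $(\star)\Rightarrow$ weak repulsion I use invariance of $Q$: a periodic orbit meeting $Q$ lies entirely in $Q$, so for $O\subseteq Q$ and $\epsilon<\mathrm{dist}(O,Q\setminus O)$ any orbit confined to $N_\epsilon(O)$ stays in $N_\epsilon(Q)$, hence by $(\star)$ enters $Q$ and, being near $O$, enters $O$ — exactly weak repulsion. For the converse I take $\epsilon_k\downarrow0$ with trapped orbits $x_k$ avoiding $Q$; recording which point of $Q$ each $f^j(x_k)$ is nearest to gives a symbolic sequence that, by continuity and invariance, follows the finite map $f|_Q$ and is eventually periodic on a cycle $O_k\subseteq Q$. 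Only finitely many cycles occur, so one cycle $O^{\ast}$ recurs; the tails of the corresponding orbits lie in $N_{\epsilon_k}(O^{\ast})\setminus O^{\ast}$, witnessing that $O^{\ast}$ is not weakly repelling.

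This handles every periodic orbit meeting $Q$; what remains, and where I expect the real difficulty, is to show unconditionally that a periodic orbit $O$ with $O\cap Q=\emptyset$ is weakly repelling (the only missing input for ``finite Rome'' $\Rightarrow$ ``all weakly repelling''). Such $O$ lies at positive distance from $Q$, so I can choose $U\supseteq O$ with $\overline U\cap Q=\emptyset$ and $U\cap P\subseteq O$. The hard part is taming oscillation of $f$ inside $U$ for slack partitions: since $P\supseteq f(\Crit f)$, every critical value of a critical point in $U$ lies in $P$, hence in $O$ or outside $U$. If $\overline U$ contained infinitely many critical points they would accumulate at some $c^{\ast}$, whose image (a limit of critical values in $P$) lies in $P'=Q$ and therefore outside $U$; a neighbourhood of $c^{\ast}$ then maps out of $U$ and is avoided by any confined orbit. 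Repeating this, orbits confined to $U$ meet only finitely many critical points, so the situation is locally piecewise monotone and the mixing argument of Lemma~\ref{lem:perrepel} applies verbatim — the interval joining a confined point to the nearest point of $O$ would also be confined, contradicting mixing. The two delicate points I would watch are (i) keeping the path--orbit correspondence faithful to the open/closed distinction so the extracted orbits genuinely avoid $O$, and (ii) this local taming of critical oscillations, which is the crux of the slack case.
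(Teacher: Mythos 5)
There is a genuine gap, and it sits at the pivot of your whole scheme: the claimed path--orbit dictionary ``$\R_\epsilon$ fails to be a Rome exactly when some orbit stays forever in funnel intervals $\overline I\subseteq N_\epsilon(Q)$, which is precisely the failure of $(\star)$'' is false in the direction you need for ``finite Rome $\Rightarrow(\star)$''. An orbit trapped in $N_\epsilon(Q)\setminus Q$ does \emph{not} produce an itinerary avoiding $\R_\epsilon$: if $q\in Q=\Acc P$ is a one-sided accumulation point (these are explicitly allowed -- see the remark before~\eqref{notrap}, and the paper's own example, where $\frac12(1-3^{-n})\uparrow\frac12$ with a long partition interval on the other side of $\frac12$), then a point within $\epsilon$ of $q$ may lie in a long partition interval $I$ with $q\in\partial I$ and $\overline I\not\subseteq N_\epsilon(Q)$, i.e.\ $I\in\R_\epsilon$. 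So the trapped orbit's itinerary can visit your Rome $\R_\epsilon$ infinitely often while the orbit never leaves $N_\epsilon(Q)$ and never lands in $Q$; a Rome visit alone yields no contradiction. Ruling out exactly this scenario is the entire content of the paper's $(\Rightarrow)$ direction: one enlarges the Rome to $\R'=\R\cup\P_X\cup\Q_X$, imposes the boundary conditions (iv)--(v), and runs the backwards induction through implications~\eqref{inPx}--\eqref{noCrit} to show that $f$ is monotone on the half-open intervals $V_k$ joining $f^k(y)$ to the tracked periodic point, so that a trapped orbit drags a whole nondegenerate interval along with it, contradicting mixing. Your proposal has no substitute for this monotonicity argument; the one-line dictionary silently assumes that being $\epsilon$-close to $Q$ forces the whole itinerary interval into $N_\epsilon(Q)$, which is exactly what fails at one-sided accumulation points. (Note also that $(\star)$ is, by your own converse argument, equivalent to weak repulsion of the cycles in $Q$ -- so deriving $(\star)$ from the Rome cannot be a matter of bookkeeping; it \emph{is} the hard direction.)

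The rest is essentially sound, and parts are genuinely nice. Your delicate point (i) -- a path avoiding $\R_\epsilon$ whose realized orbit sits in $Q$ at shared endpoints -- is real but patchable: there are at most $2\Card(\Acc P)$ partition intervals abutting $Q$, so taking $\epsilon$ smaller than their minimal length forces all abutting intervals into $\R_\epsilon$, which is precisely the role of condition~\eqref{notrap} in the paper's construction; with that choice, ``$(\star)\Rightarrow\R_\epsilon$ is a Rome'' goes through, and combined with your coding/compactness argument for ``all weakly repelling $\Rightarrow(\star)$'' it gives a correct $(\Leftarrow)$ direction, structurally close to the paper's (your nearest-point coding replaces its explicit neighborhoods $U_i$, $V_q$). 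Your unconditional treatment of periodic orbits disjoint from $\Acc P$ -- using $f(\Crit f)\subseteq P$, invariance of $\Acc P$ (Lemma~\ref{lem:accinv}), and a compactness argument to tame accumulations of critical points, then running Lemma~\ref{lem:perrepel} locally -- is correct modulo standard $\epsilon$--$\delta$ care and is a genuinely different decomposition from the paper, which treats all periodic orbits uniformly. But until you supply an actual proof of ``finite Rome $\Rightarrow(\star)$'' (in effect, reproducing the paper's $V_k$ induction for cycles in $Q$), the implication from a finite Rome to weak repulsion of the orbits meeting $\Acc P$ is unproven, and the proposal does not establish the theorem.
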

\begin{corollary}\label{cor:fgrep}
A finitely generated map has all periodic orbits weakly repelling
\end{corollary}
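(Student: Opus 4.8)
The plan is to deduce Corollary~\ref{cor:fgrep} from Theorem~\ref{th:rome} by the same two-step strategy used to prove Corollary~\ref{cor:fgleo}: first transfer the relevant combinatorial property from $f$ to $g$ through the invariance established in Lemma~\ref{lem:same}, then invoke the already-known fact about the unperturbed piecewise monotone map. Let $g$ be finitely generated from $(f,P)$, so that by Definition~\ref{def:fg} the map $f$ has finite critical set, $P$ has finite accumulation set, and both $f$ and $g$ are topologically mixing. Since $g$ is a global window perturbation of $(f,P)$, property~\eqref{it:slack} of Definition~\ref{def:window} guarantees that $P$ is a (slack) Markov partition for $g$ as well, so Theorem~\ref{th:rome} is applicable to both pairs $(f,P)$ and $(g,P)$.

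First I would observe that Lemma~\ref{lem:same} tells us the two transition graphs $\Gamma(f,P)$ and $\Gamma(g,P)$ have \emph{the same arrows} as sequences of vertices (differing only in arrow multiplicities). Consequently they have exactly the same paths $I_0\to I_1\to\cdots$, and therefore a subset $\R$ of vertices is a Rome for $\Gamma(f,P)$ if and only if it is a Rome for $\Gamma(g,P)$. In particular, $\Gamma(f,P)$ contains a finite Rome if and only if $\Gamma(g,P)$ does.

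Next I would close the loop through the piecewise monotone map $f$. By Definition~\ref{def:fg}, $f$ has finite critical set, hence is piecewise monotone, and is topologically mixing; so by Lemma~\ref{lem:perrepel} every periodic orbit of $f$ is repelling, and in particular weakly repelling. Applying Theorem~\ref{th:rome} to $(f,P)$ (whose partition $P$ has finite accumulation set) then yields that $\Gamma(f,P)$ contains a finite Rome. By the previous paragraph, $\Gamma(g,P)$ also contains a finite Rome. Since $g$ is topologically mixing and shares the finite-accumulation partition $P$, a second application of Theorem~\ref{th:rome}, this time to $(g,P)$, produces the conclusion that every periodic orbit of $g$ is weakly repelling.

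I do not anticipate a serious obstacle here, since all the substantive work has been pushed into Theorem~\ref{th:rome} and Lemma~\ref{lem:perrepel}. The only point requiring a moment's care is the bookkeeping of hypotheses for Theorem~\ref{th:rome}: one must confirm that the theorem's standing assumptions (topological mixing and a Markov partition with finite accumulation set) hold for \emph{both} $f$ and $g$, which they do by Definition~\ref{def:fg} together with property~\eqref{it:slack} of Definition~\ref{def:window}. The crucial conceptual input remains the remark following Lemma~\ref{lem:same}, namely that global window perturbation preserves paths and hence Romes; everything else is a direct chaining of the cited results.
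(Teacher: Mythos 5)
Your proposal is correct and follows exactly the paper's argument: transfer the finite Rome from $\Gamma(f,P)$ to $\Gamma(g,P)$ via Lemma~\ref{lem:same}, obtaining the Rome for $f$ from Lemma~\ref{lem:perrepel} and the $(\Leftarrow)$ direction of Theorem~\ref{th:rome}, then conclude via the $(\Rightarrow)$ direction applied to $(g,P)$. The paper's proof is merely a terser statement of the same chain, and your extra bookkeeping (checking that $g$ is mixing and that $P$ is a slack partition for $g$ with the same finite accumulation set) is exactly the verification the paper leaves implicit.
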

\begin{proof}
At the symbolic level, global window perturbation preserves the existence of a finite Rome -- Lemma~\ref{lem:same}. In light of Lemma~\ref{lem:perrepel} and Theorem~\ref{th:rome}, the result follows.
\end{proof}

Before proving Theorem~\ref{th:rome}, we gather together the necessary lemmas. The first is more or less a standard exercise in symbolic dynamics; it links orbits of the interval map with paths (itineraries) in the transition graph.

\begin{lemma}\label{lem:itineraries}
Suppose $f\in C([0,1])$ is topologically mixing with a Markov partition $P$.
\begin{itemize}
\item If $y\in[0,1]$ is a point whose orbit does not meet $\Acc P$, then there is a path $I_0\to I_1\to \cdots$ in $\Gamma(f,P)$ such that $f^n(y)\in\overline{I_n}$ for each $n\in\mathbb{N}$.
\item For each path $I_0\to I_1 \to \cdots$ in $\Gamma(f,P)$ there is a point $y\in[0,1]$ such that $f^n(y)\in \overline{I_n}$ for each $n\in\mathbb{N}$.
\end{itemize}
\end{lemma}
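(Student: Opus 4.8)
The plan is to treat the two bullets separately, after first recording the Markov geometry that both rely on. For any partition interval $I\in\P$, Lemma~\ref{lem:components} shows that each component of $f^{-1}(J)$ meeting $I$ is carried homeomorphically onto $J$ and avoids $\Crit(f)$; combined with the invariance $f(P)\subseteq P$ (and, for a slack partition, with $f(\Crit(f))\subseteq P$, which keeps every critical value inside $P$), this forces $f(\overline{I})$ to be a closed interval whose endpoints lie in $P$. Hence $f(\overline{I})$ is exactly a union of closures $\overline{J}$ of consecutive partition intervals, and the arrow relation unwinds as $I\to J \iff \overline{J}\subseteq f(\overline{I})$. I would isolate this containment as the single geometric fact driving both parts.

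For the first bullet I would build the path inductively. Set $y_n=f^n(y)$; by hypothesis no $y_n$ lies in $\Acc P$. Choosing $I_0$ is free: if $y_0\notin P$ it is interior to a unique partition interval, and if $y_0\in P\setminus\Acc P$ it is an isolated point of $P$, hence a common endpoint of one or two genuine partition intervals, either of which I may take as $\overline{I_0}$. Given $I_n$ with $y_n\in\overline{I_n}$, I have $y_{n+1}=f(y_n)\in f(\overline{I_n})$, and since $f(\overline{I_n})$ is a block of partition-interval closures I can select $I_{n+1}$ with $y_{n+1}\in\overline{I_{n+1}}\subseteq f(\overline{I_n})$, i.e. $I_n\to I_{n+1}$: if $y_{n+1}$ is interior to the block the interval is forced, if it is an interior partition point either adjacent interval works, and if it is an endpoint of the block I take the unique adjacent interval lying inside the block. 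The role of excluding $\Acc P$ is exactly that, in this last case, the in-block side really does carry an adjacent partition interval having $y_{n+1}$ as an endpoint, rather than a sequence of partition intervals shrinking down to $y_{n+1}$.

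For the second bullet I would realize the path by nested pullbacks. For each arrow $I_k\to I_{k+1}$, Lemma~\ref{lem:components} provides a closed subinterval $H_k\subseteq\overline{I_k}$ on which $f$ restricts to a homeomorphism onto $\overline{I_{k+1}}$; let $\phi_k\colon\overline{I_{k+1}}\to H_k$ be its inverse and set $V_n=(\phi_0\circ\cdots\circ\phi_{n-1})(\overline{I_n})$, a nondegenerate closed subinterval of $\overline{I_0}$ on which $f^n$ is a homeomorphism onto $\overline{I_n}$. Since $\phi_n(\overline{I_{n+1}})=H_n\subseteq\overline{I_n}$, the intervals $V_n$ are nested, so by compactness $\bigcap_n V_n\neq\emptyset$; any $y$ in the intersection lies in every $V_k$ and therefore satisfies $f^k(y)\in f^k(V_k)=\overline{I_k}$ for all $k$.

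I expect the first bullet to be the delicate step: selecting $I_{n+1}$ when $y_{n+1}$ is a partition point forces me to argue both that a transition-respecting choice exists and that avoiding $\Acc P$ is precisely what guarantees it, and it is here that the distinction between taut and slack partitions (through the location of the critical values) must be tracked so that $f(\overline{I})$ remains a clean union of partition closures. The second bullet, by contrast, is a routine nested-interval argument once Lemma~\ref{lem:components} supplies the homeomorphic branches.
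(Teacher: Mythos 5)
Your proposal is correct and takes essentially the same route as the paper: the first bullet is the paper's recursive itinerary construction with the same case analysis at isolated partition points (using the orbit's avoidance of $\Acc P$ to guarantee adjacent partition intervals exist), and the second bullet is the paper's nested-compact-pullback argument, merely phrased through explicit inverse branches $\phi_k$ instead of the intersections $\overline{I_0}\cap f^{-1}(\overline{I_1})\cap\cdots\cap f^{-n}(\overline{I_n})$. One cosmetic slip worth noting: $f(\overline{I})$ is a closed interval with endpoints in $P$ but need not be \emph{exactly} a union of closures of partition intervals when $\Acc P$ meets its interior (a two-sided accumulation point lies in no $\overline{J}$); since you only invoke this local block structure at orbit points, which by hypothesis avoid $\Acc P$, nothing in your argument breaks.
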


\begin{proof}
Let $y$ be a point with orbit disjoint from $\Acc P$. By the Markov property, it suffices to find a sequence of partition intervals such that $f^n(y)\in \overline{I_n}$ and
\begin{equation}\label{meets}
f(I_n)\cap I_{n+1}\neq\emptyset
\end{equation}
for all $n\in\mathbb{N}$. We define the sequence recursively. Since $y\notin\Acc P$ we can find $I_0\in\P$ with $y\in\overline{I_0}$. Now suppose we have chosen $I_n$. If $f^{n+1}(y)\notin P$, then we take $I_{n+1}$ to be the connected component of $[0,1]\setminus P$ containing $f^{n+1}(y)$ and continuity of $f$ gives~\eqref{meets}. Otherwise, $f^{n+1}(y)\in P\setminus \Acc(P)$. Then $f(I_n)$ is a nondegenerate interval with $f^{n+1}(y)$ in its closure. Therefore $f(I_n)$ meets at least one of the partition intervals with endpoint $f^{n+1}(y)$ -- we take that partition interval for $I_{n+1}$.

Now suppose $I_0\to I_1\to\cdots$ is a path in $\Gamma(f,P)$. Define
\begin{gather*}
J_n=I_0 \cap f^{-1}(I_1) \cap \cdots \cap f^{-n}(I_n)\\
K_n=\overline{I_0} \cap f^{-1}(\overline{I_1}) \cap \cdots \cap f^{-n}(\overline{I_n})
\end{gather*}
The sets $J_n$ are nonempty by Lemma~\ref{lem:components}. The sets $K_n\supset J_n$ are compact, nonempty and $K_{n+1}\subset K_n$. Therefore the nested intersection $\cap K_n$ contains at least one point $y$.
\end{proof}

\begin{lemma}\label{lem:accinv}
Suppose $f\in C([0,1])$ is topologically mixing and admits a (taut or slack) Markov partition $P$. Then $\Acc P$ is invariant.
\end{lemma}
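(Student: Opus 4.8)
The plan is to prove the inclusion $f(\Acc P)\subseteq\Acc P$ directly, by contradiction. Write $D=\Acc P$ for the set of accumulation points of $P$; since $P$ is closed we have $D\subseteq P$, and $D$ is itself closed. Given $x\in D$, invariance of $P$ already gives $f(x)\in P$, so the only way to have $f(x)\notin D$ is for $f(x)$ to be an \emph{isolated} point of $P$. Assuming this, I would fix $\epsilon>0$ so small that the neighborhood $(f(x)-\epsilon,f(x)+\epsilon)$ meets $P$ only in the point $f(x)$ itself, and then produce a contradiction.

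The next step is to exploit that $x$ is a genuine accumulation point of $P$: points of $P$ cluster at $x$ from at least one side, so there is a sequence of partition intervals $I_n\in\P$ whose closures shrink to $\{x\}$. The key structural claim is that each $f(\overline{I_n})$ is a closed subinterval of $[0,1]$ \emph{both of whose endpoints lie in} $P$. For a taut partition this is immediate: $\overline{I_n}$ contains no critical point in its interior, so $f$ is strictly monotone there and carries the endpoints of $I_n$ (which lie in $P$) to the endpoints of $f(\overline{I_n})$, which then lie in $P$ by invariance. For a slack partition one argues that the minimum and maximum of $f$ over the compact interval $\overline{I_n}$ are attained either at the endpoints of $I_n$ (with images in $P$ by invariance) or at interior critical points (with images in $f(\Crit f)\subseteq P$ by the defining slack condition); either way the endpoints of $f(\overline{I_n})$ lie in $P$.

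To finish, by uniform continuity the intervals $f(\overline{I_n})$ have diameters tending to $0$ and converge to $\{f(x)\}$, so for all large $n$ we have $f(\overline{I_n})\subseteq(f(x)-\epsilon,f(x)+\epsilon)$. Combined with the previous step, both endpoints of $f(\overline{I_n})$ then lie in $P\cap(f(x)-\epsilon,f(x)+\epsilon)=\{f(x)\}$, which forces $f(\overline{I_n})=\{f(x)\}$; that is, $f$ is constant on the nondegenerate interval $\overline{I_n}$. This is impossible for a topologically mixing map: if $f$ were constant on a nondegenerate interval $W$, then every forward image $f^n(W)$ with $n\ge1$ would be a single point and could not meet two disjoint open sets, contradicting mixing. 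This contradiction shows $f(x)\in D$, completing the proof.

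I expect the main obstacle to be the slack-partition case of the endpoint claim: unlike the taut case, $\overline{I_n}$ may contain critical points, so one must check carefully that every extreme value of $f$ on $\overline{I_n}$ is pinned inside $P$, via the property $f(\Crit f)\subseteq P$. The remaining ingredient -- that mixing precludes local constancy -- is short but essential, and the sub-case in which $f(x)$ is an endpoint of $[0,1]$ (so that the neighborhood of $f(x)$ is one-sided) should be noted but is routine.
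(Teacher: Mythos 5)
Your argument is correct, but it takes a genuinely different route from the paper's. The paper proves the inclusion directly: given $p\in\Acc P$ and an arbitrary neighborhood $V$ of $f(p)$, it chooses $U\ni p$ with $f(U)\subseteq V$, observes that $P\cap U$ is infinite, and splits into two cases according to whether $\Crit(f)\cap U$ is infinite or finite. In the first case an interval in $U$ with critical endpoints has two distinct images (mixing forbids flat spots) which are critical values, hence two points of $P\cap V$; in the second case $f$ is finitely-many-to-one on $U$, so $f(P\cap U)\subseteq P\cap V$ is infinite; either way $V$ meets $P$ in points other than $f(p)$, so $f(p)\in\Acc P$. You instead run a contradiction from isolatedness of $f(x)$ in $P$, and your workhorse is the ``window'' fact that the endpoints of $f(\overline{I})$ lie in $P$ for every partition interval $I$ --- pinned by invariance at the endpoints of $I$ and by the slack condition $f(\Crit f)\subseteq P$ at interior critical extrema (your exclusion of interior non-critical extrema via strict monotonicity is exactly right). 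Applied to partition intervals accumulating at $x$, shrinking windows with both endpoints forced into $P\cap(f(x)-\epsilon,f(x)+\epsilon)=\{f(x)\}$ produce a flat spot, contradicting mixing. Your route is more geometric and isolates a reusable structural lemma (essentially the fact underlying Definition~\ref{def:window} and Lemma~\ref{lem:components}), and like the paper's proof it uses mixing only through the absence of flat spots; the paper's cardinality argument is shorter and needs no interval construction. Two small points to patch: the inference from ``$P$ clusters at $x$ from one side'' to the existence of partition intervals $I_n$ with $\overline{I_n}$ shrinking to $x$ deserves a line --- take a strictly monotone sequence $p_n\to x$ in $P$; since $P$ is countable and closed it has empty interior, so $(p_{n+1},p_n)$ contains a point of $[0,1]\setminus P$, whose component has closure in $[p_{n+1},p_n]$. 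This construction also guarantees that the endpoints of your $I_n$ lie in $P$, which is worth noting because in general a partition interval of a slack partition may have $0$ or $1$ as an endpoint with $0,1\notin P$ (alternatively, the paper's convention $\{0,1\}\subseteq\Crit(f)$ puts $f(0),f(1)\in P$ anyway, so the window fact survives even in that boundary case).
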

\begin{proof}
Fix $p\in\Acc P$, a neighborhood $V$ of $f(p)$, and find a neighborhood $U$ of $p$ which maps into $V$.  Then $P\cap U$ is infinite.  Now consider two cases.

First, suppose $\Crit(f)\cap U$ is infinite. Then choose an interval $(a,b)\subset U$ whose endpoints are critical points. Since $f$ is mixing, it has no flat spots, and therefore $f(a)\neq f(b)$. But $P$ contains all critical values, so we've found two distinct points in $V\cap P$. Thus $(P\cap V)\setminus\{f(p)\}\neq\emptyset$. Since $V$ was arbitrary, this shows that $f(p)\in\Acc P$.

Second, suppose $\Crit(f)\cap U$ is finite. Then $f(P\cap U)\subset P\cap V$ is infinite, so again $(P\cap V)\setminus\{f(p)\}\neq\emptyset$. Since $V$ was arbitrary, this shows that $f(p)\in\Acc P$.
\end{proof}

\begin{proof}[Proof of Theorem~\ref{th:rome}]
$(\Leftarrow)$
First we assume that $f$ has weakly repelling periodic orbits and construct a finite Rome. By Lemma~\ref{lem:accinv}, the finite set $\Acc P$ is forward invariant. Therefore it decomposes into finitely many periodic orbits $P_1,\ldots,P_k$ as well as a (possibly empty) set of preperiodic points $Q$.  By hypothesis, we can find open weakly repelling neighborhoods $U_i\supset P_i$.  By continuity, after shrinking these open neighborhoods we may assume that their images are bounded away from each other and from $Q$,
\begin{equation}\label{disjoint}
f(U_i)\cap U_j=\emptyset,\text{ for } i\neq j, \quad\text{and}\quad q\notin\overline{f(U_i)}, \text{ for }q\in Q,i=1,\ldots,k.
\end{equation}
It may happen that a point in $P_i$ is an endpoint of a partition interval (when it is only a ``1-sided'' accumulation point). But by shrinking the sets $U_i$ even further, we may assume that
\begin{equation}\label{notrap}
\text{if } I\in\P \text{ and } \overline{I}\cap P_i \neq\emptyset, \text{ then }\overline{I}\not\subset U_i, \text{ for }i=1,\ldots,k.
\end{equation}

For each preperiodic point $q\in Q$ find $n_0,j\in\mathbb{N}$ so that $f^{n_0}(q)\in P_j$, and let $V_q$ be a neighborhood of $q$ so small that
\begin{equation}\label{fallin}
f^{n_0}(V_q)\subseteq U_j \quad \text{and} \quad V_q\cap f(U_i)=\emptyset, \text{ for }i=1,\ldots,k.
\end{equation}

Let $U$ be the union of all the sets $U_i$ and $V_q$ for $i=1,\ldots,k$ and $q\in Q$.  Let $\R=\{\,I\in\P \,|\, \overline{I}\not\subset U\,\}$.  We claim that $\R$ is a finite Rome.  Finiteness of $\R$ follows because $U$ is an open set containing all the accumulation points of $P$.

To see that $\R$ is a Rome, consider an infinite path $I_0\to I_1\to \cdots$ in $\Gamma(f,\P)$.  By Lemma~\ref{lem:itineraries} there is a point $x\in[0,1]$ with $f^n(x)\in \overline{I_n}$ for all $n\in\mathbb{N}$.  If $I_0\in\R$, then there is nothing to prove.  Otherwise $x\in U$. By~\eqref{fallin} there is $n_0\geq0$ with $f^{n_0}(x)$ in one of the sets $U_i$. Since $U_i$ is weakly repelling there is some minimal number $n\geq n_0$ such that $f^n(x)\notin U_i$ or $f^n(x)\in P_i$. In the second case, by~\eqref{notrap}, $I_n\in\R$. In the first case, $f^n(x)\in f(U_i)\setminus U_i$, so by~\eqref{disjoint} and~\eqref{fallin}, $f^n(x)\notin U$, and again we have $I_n\in\R$.

$(\Rightarrow)$ Now we assume the presence of a finite Rome $\R$ and prove that $f$ has repelling periodic orbits. Suppose $x_0\mapsto x_1\mapsto \cdots \mapsto x_{n-1}\mapsto x_0$ is a periodic orbit, and write $X=\{x_0,\ldots,x_{n-1}\}$. Let
\begin{gather*}
\P_X=\{I\in\P\,|\,\overline{I}\cap X\neq\emptyset\},\\
\Q_X=\{J\in\P\,|\, f(J)\supset I\text{ for some }I\in\P_X\}.
\end{gather*}
$\P_X$ is finite since a point belongs to the closure of at most two partition intervals. $\Q_X$ is finite by Lemma~\ref{lem:components}. Let $\R'=\R\cup\P_X\cup\Q_X$. It is another finite Rome. Take
\begin{gather*}
Y=\left(\bigcup_{I\in\R'}\partial I\right)\cup \left(\bigcup_{I\in\R'}\partial f^{-1}(I)\right) \cup \Acc(P) \cup X,\\
\epsilon=\frac12\min\{|x-y| \,|\, x\in X, y\in Y, x\neq y\},\text{ and}\\
\delta\in(0,\epsilon) \text{ such that } |x-x'|<\delta \implies |f(x)-f(x')|<\epsilon.
\end{gather*}
Notice that $Y$ is finite by the finiteness of $\R'$, $\Acc(P)$, $X$, and by Lemma~\ref{lem:components}. Thus $\epsilon>0$. Then $\delta$ exists by the uniform continuity of $f$. Our repelling neighborhood is
\begin{equation*}
U=\bigcup_{i=0}^{n} U_i,\quad \text{where }U_i=B_{\delta}(x_i)=\{y\,|\,|y-x_i|<\delta\}
\end{equation*}
The choice of $\delta$ gives $U$ the following five properties:
\begin{enumerate}[(i)]
\item for $i\neq j$, $U_i\cap U_j=\emptyset$,
\item for all $i$, $f(U_i) \cap U \subset U_{i+1 \text{ mod }n}$,
\item $U\cap\Acc P \subset X$,
\item for all $I\in\R'$, $U \cap \partial I \subset X$, and
\item for all $I\in\R'$, $U \cap \partial f^{-1}(I) \subset X$.
\end{enumerate}

We claim that $U$ is a weakly repelling neighborhood of $X$. Suppose to the contrary that there exists $y\in U$ such that for all $k\in\mathbb{N}$, $f^k(y)\in U\setminus X$. For the remainder of the proof, all subscripts on $x$'s and $U$'s will be understood modulo $n$. By property~(i) there is a unique $i$ such that $y\in U_i$, and by property~(ii) we have $f^k(y)\in U_{i+k}$ for each natural number $k$. By property~(iii) the orbit of $y$ is disjoint from $\Acc P$, so by Lemma~\ref{lem:itineraries} there is a path $I_0 \to I_1 \to \cdots$ in the transition graph $\Gamma(f,P)$ with $f^k(y)\in\overline{I_k}$ for all $k$. Since $\R'$ is a finite Rome, there are infinitely many $k$ with $I_k\in\R'$. Let $V_k$ denote the half-open interval 
\begin{equation*}
V_k=\begin{cases} [f^k(y),x_{i+k}),&\text{if }f^k(y)<x_{i+k}\\(x_{i+k},f^k(y)],&\text{if }x_{i+k}<f^k(y)\end{cases}
\end{equation*}
We claim that for each $k\in\mathbb{N}$, $f|_{V_k}$ is monotone. This implies that $f^k(V_0)=V_k\subset U$ for all $k$, which contradicts topological mixing.  In fact, to finish the proof, it is enough to establish implications~\eqref{inPx}-\eqref{noCrit} below -- then our claim follows from a backwards induction argument, remembering that there are arbitrarily large natural numbers $k$ with $I_k\in\R'$.
\begin{gather}
\label{inPx} I_k\in\R'\implies V_k\subset I_k \text{ and } I_k\in\P_X.\\
\label{inR'} I_{k+1}\in\P_X\implies I_k\in\R'.\\
\label{noCrit} I_{k+1}\in\P_X\implies V_k\cap \Crit(f)=\emptyset.
\end{gather}

In this paragraph we prove implication~\eqref{inPx}. Assume $I_k\in\R'$. By property~(iv), $\partial I_k \cap U_{i+k}\subset\{x_{i+k}\}$. Thus $I_k$ either contains all of $U_{i+k}$ or else ``half'' of it, extending from the centerpoint $x_{i+k}$ past one endpoint. In particular, this proves $I_k\in\P_X$. Now $f^k(y)$ is in $\overline{I_k}$ and is in $U_{i+k}$ and is not equal to $x_{i+k}$, so we may conclude that $f^k(y)\in I_k$. This shows that $V_k\subset I_k$.

In this paragraph we prove implication~\eqref{inR'}. Assume $I_{k+1}\in\P_X$. The arrow $I_k\to I_{k+1}$ in the transition graph means that $f(I_k)\supset I_{k+1}$, so $I_k\in\Q_X\subset\R'$.

In this paragraph we prove implication~\eqref{noCrit}. Assume $I_{k+1}\in\P_X$. Now $V_k$ is connected and meets $f^{-1}(I_{k+1})$ but not $\partial f^{-1}(I_{k+1})$ -- since $f^k(y)\in V_k$ and by property~(v). This implies that $V_k$ does not meet the complement of $f^{-1}(I_{k+1})$. Thus $f(V_k)\subset I_{k+1}$. Since $I_{k+1}\cap P=\emptyset$ and all critical values are in $P$, it follows that $V_k\cap\Crit(f)=\emptyset$.
\end{proof}

\section{Eigenvectors for Finitely Generated Maps}\label{sec:eig}

The main goal of this section is to establish summability and uniqueness of the eigenvectors associated with the transition matrix of a countably generated map.

Let $A$ be any nonnegative matrix with rows and columns indexed by a countable set $\P$.  We use the terms \emph{eigenvalues} and \emph{eigenvectors} of $A$ to refer only to the \textbf{nonnegative} solutions to the system of equations $\sum_J A_{IJ}v_J = \lambda v_I$, $I\in\P$.

\subsection{Summability follows from the locally eventually onto property.}\strut\\[-1em]
\begin{proposition}\label{prop:leosum}
Let $A$ be a matrix with countable index set $\P$ and entries from $\mathbb{N}$. If $A$ is locally eventually onto, then every eigenvector of $A$ is summable.
\end{proposition}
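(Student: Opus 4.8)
The plan is to exploit the eigen-equation iteratively and then dominate the whole vector by a single coordinate. Write the defining relation as $(Av)_I = \sum_J A_{IJ} v_J = \lambda v_I$ for all $I \in \P$. That $v$ is an eigenvector means each such sum converges, so $\lambda$ is a finite nonnegative real and every $v_I$ is finite. If $v$ is the zero vector it is trivially summable, so I would assume $v \neq 0$ henceforth (this assumption is not even strictly needed, but it clarifies the argument).

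First I would establish that the eigen-relation passes to all powers, namely $(A^n v)_I = \lambda^n v_I$ for every $I \in \P$ and every $n \geq 0$. This follows by induction on $n$, with inductive step
\[
(A^{n+1} v)_I = \sum_K A_{IK}\, (A^n v)_K = \sum_K A_{IK}\, \lambda^n v_K = \lambda^n \sum_K A_{IK} v_K = \lambda^{n+1} v_I.
\]
The one point requiring care is the reassociation of the double sum identifying $A(A^n v)$ with $(A^{n+1})v$, i.e. $\sum_K A_{IK} \sum_J A^n_{KJ} v_J = \sum_J A^{n+1}_{IJ} v_J$. Since $A$ and $v$ have nonnegative entries, every term is nonnegative and this interchange is justified by the Tonelli (monotone convergence) theorem for nonnegative series; each intermediate sum is finite because it equals $\lambda^n v_I < \infty$.

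With this in hand the conclusion is immediate. Fix any single index $I_0 \in \P$. Because $A$ is locally eventually onto, there is some $N$ with $A^N_{I_0 J} > 0$ for all $J \in \P$; as $A$ has entries in $\mathbb{N}$, this means $A^N_{I_0 J} \geq 1$ for every $J$. Therefore
\[
\sum_{J \in \P} v_J \;\leq\; \sum_{J \in \P} A^N_{I_0 J}\, v_J \;=\; (A^N v)_{I_0} \;=\; \lambda^N v_{I_0} \;<\; \infty,
\]
so $v$ is summable.

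The argument is short, and the only genuine obstacle is bookkeeping with the infinite sums, namely confirming that the eigen-relation truly iterates to $A^n v = \lambda^n v$ over a countable index set. Nonnegativity of all entries removes any convergence difficulty, so no estimates are needed; the locally eventually onto hypothesis is used exactly once, to produce a single everywhere-positive row of some power $A^N$, which then dominates the entire vector $v$ by the finite quantity $\lambda^N v_{I_0}$.
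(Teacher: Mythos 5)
Your proof is correct and follows essentially the same route as the paper: find a power $A^N$ with an everywhere-positive row via the locally eventually onto hypothesis, then bound $\sum_J v_J \leq \lambda^N v_{I_0}$. The only difference is that you explicitly justify the iteration $A^n v = \lambda^n v$ by Tonelli, a bookkeeping step the paper leaves implicit.
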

\begin{proof}
Fix $I\in\P$ and find $n\in\mathbb{N}$ such that for all $J\in\P$, $A^n_{IJ}>0$. Then each $A^n_{IJ}\geq1$, so for the eigenvector $v$ we have
\begin{equation*}
\sum_J v_J \leq \sum_J A^n_{IJ} v_J = \lambda^n v_I < \infty.
\end{equation*}
\end{proof}
\begin{corollary}\label{cor:fgsummable}
Let $g$ be finitely generated from $(f,P)$. Then every eigenvector of $A(g,P)$ is summable.
\end{corollary}
\begin{proof}
This follows in light of Lemma~\ref{lem:leoleo} and Corollary~\ref{cor:fgleo}.
\end{proof}

\subsection{Uniqueness follows from the presence of a finite Rome.}\strut\\
If a nonnegative matrix $A$ is irreducible and aperiodic, then the \emph{Perron value} of $A$ is the limit $\lambda_A=\lim \sqrt[n]{A^n_{IJ}}$, for fixed (arbitrary) $I,J$ from the index set.  Following Vere-Jones \cite{VJ}, if $\lambda_A<\infty$, then we call $A$
\begin{itemize}
\item \emph{recurrent} if the sum $\sum_n A^n_{IJ} \lambda^{-n}_A$ diverges, and 
\item \emph{transient} if the sum converges.
\end{itemize} 
The Perron value is known to be a lower bound for the eigenvalues of $A$.  In the recurrent case, it is an eigenvalue and it carries exactly one eigenvector, up to scaling, called the Parry eigenvector (see, eg. \cite{K}).

\begin{definition}\label{def:transeig}
An eigenvalue $\lambda$ of an irreducible aperiodic nonnegative matrix $A$ is called a \emph{transient eigenvalue}
if $A$ is Vere-Jones transient or if $\lambda > \lambda_A$.
\end{definition}

Thus, recurrent matrices may have transient eigenvalues.  The meaning of transience is simply that $\sum_n A^n_{IJ} \lambda^{-n}<\infty$ for some (every) pair $I,J$.

Note that Definition~\ref{def:transeig} is nonstandard. We introduce the terminology for the simple reason that transient eigenvalues can be used to define transient stochastic Markov chains. This is the main idea in the proof of Theorem~\ref{th:path}.

\begin{definition}
A \emph{simple path to infinity} in a countable directed graph $\Gamma$ is a path $I_0\to I_1 \to \cdots$ with all vertices distinct.  
\end{definition}

\begin{remark}\label{rem:incompatible}
Observe that no countable directed graph contains both a simple path to infinity and a finite Rome.  These two features are incompatible.
\end{remark}

Our next theorem gives a simple necessary condition for existence of a transient eigenvalue in terms of the structure of the corresponding directed graph.  In this theorem, multiplicity of arrows plays no role and we can allow $A$ to have arbitrary nonnegative entries. Thus, we simply define $\Gamma$ to be the graph with an arrow $I\to J$ if and only if $A_{IJ}>0$.  This theorem is an adaptation of a result by T.E. Harris \cite[Theorem 1]{H} from the theory of stochastic Markov chains.

\begin{theorem}\label{th:path}
Let $A$ be an irreducible aperiodic nonnegative matrix, and let $\Gamma$ be the corresponding countable directed graph.  If $A$ admits a transient eigenvalue, then $\Gamma$ contains a simple path to infinity.
\end{theorem}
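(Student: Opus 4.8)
The plan is to convert the transient eigenvalue into a genuinely transient stochastic Markov chain on the vertex set $\P$, and then to extract a simple path to infinity from a typical trajectory of that chain. Let $v$ be the eigenvector associated with the transient eigenvalue $\lambda$. First I would record two preliminary facts forced by irreducibility: that $\lambda>0$ (otherwise $\sum_J A_{IJ}v_J=\lambda v_I=0$ forces $A_{IJ}v_J=0$ for all $I,J$, so the column of any $J$ with $v_J>0$ is zero, contradicting irreducibility), and that $v_I>0$ for every $I\in\P$ (if $v_J=0$, pick $I$ with $v_I>0$ and $n$ with $A^n_{JI}>0$; then $\lambda^n v_J=\sum_K A^n_{JK}v_K\geq A^n_{JI}v_I>0$, a contradiction). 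With positivity in hand, define $P_{IJ}=A_{IJ}v_J/(\lambda v_I)$. The eigenvector equation shows that each row of $P$ sums to $1$, so $P$ is stochastic, and a one-line induction gives $P^n_{IJ}=A^n_{IJ}v_J/(\lambda^n v_I)$.

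The point of this $\lambda$-transform is that $\sum_n P^n_{IJ}=(v_J/v_I)\sum_n A^n_{IJ}\lambda^{-n}$, so the defining property of a transient eigenvalue, namely $\sum_n A^n_{IJ}\lambda^{-n}<\infty$, says exactly that the Markov chain with transition matrix $P$ is transient in the probabilistic sense. Moreover $P_{IJ}>0\iff A_{IJ}>0\iff I\to J$ in $\Gamma$, so $P$ is irreducible and shares the graph $\Gamma$; hence any path produced for $P$ is a path in $\Gamma$. I would then realize $P$ as an honest Markov chain $(X_n)_{n\geq 0}$ started at a fixed vertex $I_0$. Transience forces each state to be visited only finitely often almost surely, and since the trajectory is infinite while the state space is countable, almost every trajectory visits infinitely many distinct vertices.

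Finally I would apply loop-erasure to such a trajectory. Setting $t_0=\sup\{n:X_n=I_0\}$ (finite almost surely), $y_0=I_0$, and recursively $y_{k+1}=X_{t_k+1}$ and $t_{k+1}=\sup\{n:X_n=y_{k+1}\}$, the times $t_k$ strictly increase and the vertices $y_k$ are pairwise distinct, while each consecutive pair satisfies $X_{t_k}=y_k$ and $X_{t_k+1}=y_{k+1}$ and is therefore joined by an arrow $y_k\to y_{k+1}$ that the chain actually traverses. Thus $y_0\to y_1\to\cdots$ is a simple path to infinity in $\Gamma$.

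The main obstacle is this last step: turning the soft fact that the chain escapes every finite set into a genuinely non-self-intersecting infinite path. The loop-erasure bookkeeping -- that the last-visit times are almost surely finite, strictly increasing, and exhaust infinitely many distinct states -- is precisely the content adapted from Harris. By contrast, the $\lambda$-transform reduction is routine once positivity of $v$ is established, so the probabilistic extraction of the simple path is where the real work lies.
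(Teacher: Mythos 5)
Your proposal is correct and follows essentially the same route as the paper: the $\lambda$-transform $p_{IJ}=A_{IJ}v_J/(\lambda v_I)$ producing a stochastically transient Markov chain, followed by loop-erasure of an almost-sure sample path via last-visit times, exactly as in the paper's adaptation of Harris. Your explicit verification that $\lambda>0$ and $v_I>0$ for all $I$ (via irreducibility) is a detail the paper leaves implicit, but it does not change the argument.
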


\begin{proof}
Denote the transient eigenvalue by $\lambda$, the corresponding eigenvector by $v$.  Denote by $\P$ the index set for the rows and columns of $A$ (it is the vertex set of $\Gamma$).  Consider a \emph{stochastic} Markov chain with state space $\P$ and transition probabilities $p_{IJ}=\frac{A_{IJ}v_J}{\lambda v_I}$;~ it is clear that the row sums $\sum_J p_{IJ}$ are $1$ as required.  Moreover, $p_{IJ}>0$ if and only if $A_{IJ}>0$, so that (with probability 1) each path from the stochastic Markov chain is also a path in $\Gamma$.

The reader may easily verify by induction that the $n$-step transition probabilities are given by $p^{(n)}_{IJ}=\frac{A^n_{IJ}v_J}{\lambda^n v_I}$.  Transience of the eigenvalue $\lambda$ then implies that our stochastic Markov chain is transient in the stochastic sense, i.e., $\sum_n p^{(n)}_{IJ}<\infty$ for each pair $I,J$.  Thus the probability that a sample path returns to the same state infinitely often is zero -- see, eg., \cite[XIII.2 Definition 2 and XV.5 Definition 2]{F}.  So choose from the full measure set a sample path $x=x_0\to x_1\to\cdots$ which visits each state at most finitely many times.  Let $y$ be the path with $y_0=x_0$ and defined inductively by setting $y_{i+1}$ equal to the state which appears immediately after the last occurrence of $y_i$ in the path $x$.  Then $y$ is a simple path to infinity in $\Gamma$.
\end{proof}

\begin{corollary}\label{cor:fgunique}
Let $g$ be finitely generated from $(f,P)$. Then $A(g,P)$ has at most one nonnegative eigenvector. It exists if and only if $A(g,P)$ is recurrent and its eigenvalue is the Perron value $\lambda_{A(g,P)}$.
\end{corollary}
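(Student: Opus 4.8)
The plan is to combine the structural dichotomy ``finite Rome versus simple path to infinity'' (Remark~\ref{rem:incompatible}) with Theorem~\ref{th:path} to rule out transient eigenvalues, and then to invoke the Vere--Jones classification recalled after Definition~\ref{def:transeig}. First I would assemble the input facts. Since $g$ is topologically mixing, $A:=A(g,P)$ is irreducible and aperiodic, as observed just before Lemma~\ref{lem:leoleo}. By Corollary~\ref{cor:fgrep} every periodic orbit of $g$ is weakly repelling, so Theorem~\ref{th:rome} (whose hypotheses hold because $g$ is mixing and $P$ has finite accumulation set) furnishes a finite Rome in $\Gamma(g,P)$. By Remark~\ref{rem:incompatible} the graph then contains no simple path to infinity, whence the contrapositive of Theorem~\ref{th:path} shows that $A$ admits no transient eigenvalue.

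Next I would unpack the meaning of ``no transient eigenvalue'' through Definition~\ref{def:transeig}. Suppose $v$ is a nonnegative eigenvector with (finite) eigenvalue $\lambda$. Because the Perron value is a lower bound for the eigenvalues of $A$, we have $\lambda_A\leq\lambda<\infty$, so the Vere--Jones dichotomy applies to $A$. Since $\lambda$ fails to be transient, the negation of the disjunction in Definition~\ref{def:transeig} yields simultaneously that $A$ is not Vere--Jones transient---hence recurrent---and that $\lambda\not>\lambda_A$, i.e.\ $\lambda\leq\lambda_A$. Combined with the lower bound this pins down $\lambda=\lambda_A$. Thus the mere existence of a nonnegative eigenvector forces $A$ to be recurrent with eigenvalue equal to the Perron value.

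Finally I would close the loop. In the recurrent case the Perron value carries exactly one eigenvector up to scaling, the Parry eigenvector; combined with the previous paragraph---which shows that \emph{every} eigenvalue must equal $\lambda_A$---this gives that $A$ has at most one nonnegative eigenvector. For the converse direction of the stated equivalence, recurrence of $A$ presupposes $\lambda_A<\infty$ and makes $\lambda_A$ an eigenvalue carrying the Parry eigenvector, so an eigenvector exists precisely when $A$ is recurrent, its eigenvalue then being $\lambda_A$. The step deserving the most care, and the main obstacle, is the logical unpacking in the middle paragraph: one must correctly negate the disjunction of Definition~\ref{def:transeig} to extract \emph{both} the recurrence of $A$ and the upper bound $\lambda\leq\lambda_A$, while checking that the finiteness hypothesis $\lambda_A<\infty$ underlying the recurrent/transient trichotomy is genuinely available---which it is, since any eigenvalue is a finite upper bound for $\lambda_A$, and in the degenerate case $\lambda_A=\infty$ both sides of the equivalence fail.
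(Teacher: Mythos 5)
Your proposal is correct and follows exactly the paper's argument: finite Rome via Corollary~\ref{cor:fgrep} and Theorem~\ref{th:rome}, no simple path to infinity by Remark~\ref{rem:incompatible}, no transient eigenvalues by Theorem~\ref{th:path}, and then the conclusion from Definition~\ref{def:transeig} together with the uniqueness of the Parry eigenvector in the recurrent case. Your middle paragraph merely spells out in more detail the unpacking (including the degenerate case $\lambda_{A}=\infty$) that the paper compresses into its final sentence.
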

\begin{proof}
$\Gamma(g,P)$ contains a finite Rome -- see Theorem~\ref{th:rome} and Corollary~\ref{cor:fgrep}. Therefore it has no simple path to infinity -- see Remark~\ref{rem:incompatible}. We may conclude that $A(g,P)$ has no transient eigenvalues. The corollary then follows from the definition of a transient eigenvalue, coupled with the fact that the Perron value of a recurrent matrix carries exactly one nonnegative eigenvector.
\end{proof}

\section{Constant Slope Models for Finitely Generated Maps}\label{sec:main}

An interval map $f$ has \emph{constant slope} $\lambda$ if $|f'(x)|=\lambda$ for $x\not\in\Crit(f)$.  A \emph{constant slope model} of $f$ is a topologically conjugate map $g$ with constant slope.  To avoid trivial non-uniqueness, we take only conjugacies $g=\psi\circ f\circ\psi^{-1}$ by homeomorphisms $\psi:[0,1]\to[0,1]$ which preserve orientation.

\subsection{Main theorem.}\strut\\
Having studied the dynamical properties and eigenvectors of finitely generated maps, it is a very easy matter to state and prove our main theorem.

\begin{theorem}\label{th:main}
A finitely generated map $g$ has at most one constant slope model. It exists if and only if $g$ is Vere-Jones recurrent. The constant slope $\lambda$ is necessarily given by the entropy, $\lambda=\exp h(g)$.
\end{theorem}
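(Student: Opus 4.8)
The plan is to reduce everything to the eigenvector theory developed in Section~\ref{sec:eig} via the standard correspondence between constant slope models and nonnegative eigenvectors of the transition matrix. First I would recall the classical mechanism (due to Parry, and worked out for countable Markov partitions in \cite{BB,BS,MR}) linking a constant slope model to an eigenvector: if $g$ has a Markov partition $P$ with transition matrix $A=A(g,P)$, and $v=(v_I)_{I\in\P}$ is a nonnegative eigenvector with eigenvalue $\lambda$, then defining a homeomorphism $\psi$ by distributing the lengths of the partition intervals proportionally to $v_I$ produces a conjugate map of constant slope $\lambda$; conversely a constant slope model yields such an eigenvector, where $v_I$ is the length of the image of $\overline{I}$ under the conjugacy and $\lambda$ is the slope. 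For this dictionary to work we need the eigenvector to be \emph{summable}, so that the total length $\sum_I v_I$ is finite and the construction of $\psi$ lands in $[0,1]$; this is exactly what Corollary~\ref{cor:fgsummable} guarantees for a finitely generated map.

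With this correspondence in hand, the three assertions of the theorem follow almost formally. For uniqueness I would argue that two constant slope models, being conjugate to $g$ via orientation-preserving homeomorphisms, would yield two nonnegative eigenvectors of $A(g,P)$; after normalizing their total lengths to $1$ these eigenvectors must coincide by Corollary~\ref{cor:fgunique}, which then forces the two conjugacies $\psi$ to agree, hence the two models are identical. For existence, Corollary~\ref{cor:fgunique} tells us that a nonnegative eigenvector exists if and only if $A(g,P)$ is recurrent with eigenvalue equal to the Perron value $\lambda_{A(g,P)}$; translating through the dictionary, a constant slope model exists if and only if $g$ is Vere-Jones recurrent. For the entropy claim I would invoke that for a topologically mixing interval map admitting a countable Markov partition, the topological entropy equals $\log\lambda_{A(g,P)}$, the logarithm of the Perron value; since in the recurrent case the eigenvalue of the unique eigenvector \emph{is} the Perron value, the constant slope satisfies $\lambda=\lambda_{A(g,P)}=\exp h(g)$.

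The main obstacle is making the passage between the analytic object (the conjugacy $\psi$ and the resulting piecewise affine map) and the combinatorial object (the eigenvector) fully rigorous, rather than merely citing it. Concretely, I must check that the candidate $\psi$ built from a summable eigenvector is a genuine orientation-preserving homeomorphism: one defines $\psi$ on the countable set $P$ by sending the gaps between consecutive partition points to intervals of length $v_I/\sum_J v_J$, extends monotonically, and verifies continuity and surjectivity onto $[0,1]$. The Markov property together with the eigenvector equation $\sum_J A_{IJ}v_J=\lambda v_I$ then ensures that $g\mathbin{:=}\psi\circ g\circ\psi^{-1}$ has slope exactly $\lambda$ on each image interval; the eigenvector equation is precisely the condition that the total length of the images of the subintervals mapping across $\overline{I}$ matches $\lambda$ times the length of the image of $\overline{I}$. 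A subtle point deserving care is the role of the boundary behavior of $\psi$ at accumulation points of $P$, where infinitely many partition intervals cluster; summability of $v$ is what keeps $\psi$ continuous there, so I would single out that verification as the one place where the finitely-generated hypothesis (through Corollary~\ref{cor:fgsummable}) does real work beyond bookkeeping.

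Once these checks are in place, the remaining steps are genuinely routine, and the brevity promised in the sentence preceding the theorem statement is justified: the theorem is essentially a corollary of the summability and uniqueness results for eigenvectors, repackaged in the language of conjugacies.
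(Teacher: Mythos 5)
Your overall route coincides with the paper's: the paper also reduces the theorem to the eigenvector theory of Section~\ref{sec:eig}, simply citing \cite[Theorem 3]{BB} for the dictionary~\eqref{bijection} you propose to re-derive (summable eigenvectors of $A(g,P)$ up to scaling $\leftrightarrow$ constant slope models, eigenvalues $\mapsto$ slopes) and \cite[Proposition 8]{BB} for the identity $h(g)=\log\lambda_{A(g,P)}$, and then concluding exactly as you do from Corollaries~\ref{cor:fgsummable} and~\ref{cor:fgunique}. Whether one cites or reproves the dictionary is a matter of taste; your sketch of the construction of $\psi$ from a summable eigenvector, including the continuity check at accumulation points of $P$, is the right outline of that proof.

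There is, however, one genuine gap: you never treat the case $h(g)=\infty$, and nothing in Definition~\ref{def:fg} excludes it --- global window perturbation may insert unboundedly many laps into the windows, so finitely generated maps of infinite entropy do exist. Both external facts your argument leans on are established in \cite{BB} only under the hypothesis $h(g)<\infty$, and when $h(g)=\infty$ the Perron value $\lambda_{A(g,P)}$ is infinite, so the series $\sum_n A^n_{IJ}\lambda_{A}^{-n}$ underlying the Vere-Jones dichotomy is undefined and your sentence ``a constant slope model exists if and only if $g$ is Vere-Jones recurrent'' is not even a statement in that regime. The paper disposes of this case by a separate short argument that you would need to reproduce: a map of constant slope $\lambda$ is Lipschitz, and Lipschitz interval maps have finite entropy \cite{KH}, so an infinite-entropy map has no constant slope model at all (and the proof of \cite[Proposition 8]{BB} shows $\lambda_{A(g,P)}=\infty$ there, so no recurrence condition can hold). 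A secondary, smaller overreach: in your uniqueness step, equality of the two normalized eigenvectors does not ``force the two conjugacies $\psi$ to agree'' --- the bijection of \cite[Theorem 3]{BB} is between eigenvectors and \emph{models}, not conjugacies, and what your argument actually requires is that the model is determined by its eigenvector (equivalently, injectivity of the assignment model $\mapsto$ eigenvector); this holds because $v$ determines $\psi$ on $P$ by cumulative sums and then determines the model lap-by-lap via constant slope and affinity, but it is precisely the content of the cited bijection rather than a consequence of normalization alone.
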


A word of explanation is needed. \cite[Proposition 9]{BB} says that for a topologically mixing map with a countable post-critical set, the Vere-Jones class of the transition matrix is independent of the choice of the Markov partition, and is thus a property of the underlying interval map. This justifies our use of the language \emph{$g$ is Vere-Jones recurrent} as a condition in the theorem.

\begin{proof}[Proof of Theorem~\ref{th:main}]
There are two relevant facts from the literature applying to a topologically mixing map $g$ with a slack Markov partition $P$ and with finite entropy $h(g)<\infty$. First, J. Bobok  \cite[Theorem 3]{BB} showed that there is a bijection
\begin{equation}\label{bijection}
\left\{ \parbox{15em}{\centering summable eigenvectors of $A(g,P)$,\\ up to scaling} \right\}
\leftrightarrow
\left\{ \text{constant slope models for $g$} \right\},
\end{equation}
and the bijection carries eigenvalues to slopes. Second, J. Bobok and H. Bruin \cite[Proposition 8]{BB} showed that the Perron value corresponds the topological entropy $h(g)=\log \lambda_{A(g,P)}$.

In our situation we have $g$ finitely generated from a pair $(f,P)$. By Corollary~\ref{cor:fgunique} $A(g,P)$ has at most one eigenvector, namely, the Parry eigenvector, and it exists if and only if $g$ is recurrent. Its eigenvalue is the Perron value, and hence corresponds to the entropy.  Summability of the eigenvector follows from Corollary~\ref{cor:fgsummable}.  This proves the result for finite entropy maps.

If $h(g)=\infty$, then the proof of \cite[Proposition 8]{BB} shows that the Perron value $\lambda_{A(g,P)}$ is also infinite. In this case it makes no sense to speak of the Vere-Jones classification. Moreover, there cannot be any constant slope model, because a Lipschitz continuous interval map always has finite entropy~\cite{KH}.
\end{proof}

In fact, we have proved something slightly stronger. If we replace the hypothesis \emph{$g$ is finitely generated} with the individual properties of a finitely generated map, we get the following results.

\begin{theoremrecapitulate}
If $g\in C([0,1])$ is topologically mixing, $\Acc(\Post(g))$ is finite, and all periodic orbits of $g$ are weakly repelling, then $g$ has at most one constant slope model, and the slope $\lambda$ is necessarily given by the entropy, $\lambda=\exp h(g)$.

If $g\in C([0,1])$ is locally eventually onto, $\Post(g)$ is countable, $h(g)<\infty$, and $g$ is Vere-Jones recurrent, then $g$ has a constant slope model with slope $\lambda$ given by the entropy, $\lambda=\exp h(g)$.
\end{theoremrecapitulate}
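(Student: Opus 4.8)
The plan is to assemble the \emph{recapitulated} version of Theorem~\ref{th:main} directly from the machinery already built, taking care that each hypothesis in the recapitulation is strong enough to supply exactly the intermediate property it replaces. The two statements correspond to the ``uniqueness'' half and the ``existence'' half of the main theorem, and I would prove them separately, in each case choosing a suitable Markov partition $P$ and invoking Bobok's bijection~\eqref{bijection} between summable eigenvectors of $A(g,P)$ and constant slope models.

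For the first statement, I would begin by choosing $P=\Post(g)$, which is a taut (hence slack) Markov partition with $\Acc(P)=\Acc(\Post(g))$ finite, so the hypotheses of Theorem~\ref{th:rome} are met. Since all periodic orbits of $g$ are weakly repelling, that theorem gives a finite Rome in $\Gamma(g,P)$; by Remark~\ref{rem:incompatible} there is no simple path to infinity, so by Theorem~\ref{th:path} the matrix $A(g,P)$ has no transient eigenvalue. As in the proof of Corollary~\ref{cor:fgunique}, it follows that $A(g,P)$ has at most one eigenvector, carried by the Perron value in the recurrent case. The key subtlety here is that the recapitulated hypotheses do \emph{not} assume $g$ is locally eventually onto, so I cannot invoke Proposition~\ref{prop:leosum} to conclude summability for free. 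Instead I would note that the bijection~\eqref{bijection} only ever pairs constant slope models with \emph{summable} eigenvectors; since there is at most one eigenvector total, there is at most one summable one, so at most one constant slope model. Its slope equals its eigenvalue, which (being the unique eigenvalue) must be the Perron value, and by Bobok--Bruin's \cite[Proposition 8]{BB} the logarithm of the Perron value is $h(g)$, giving $\lambda=\exp h(g)$.

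For the second statement the hypotheses are engineered to produce an eigenvector and to guarantee it is summable. Here $g$ is locally eventually onto with $\Post(g)$ countable and $h(g)<\infty$; I again take a slack Markov partition $P$ (for instance $P=\Post(g)$). Since $g$ is Vere-Jones recurrent with finite entropy, the Perron value $\lambda_{A(g,P)}=\exp h(g)$ is finite and is an eigenvalue carrying the Parry eigenvector. Because $g$ is locally eventually onto, Lemma~\ref{lem:leoleo} makes $A(g,P)$ locally eventually onto, so Proposition~\ref{prop:leosum} forces this eigenvector to be summable. Feeding the summable Parry eigenvector through the bijection~\eqref{bijection} yields a constant slope model, with slope equal to the eigenvalue $\lambda_{A(g,P)}=\exp h(g)$.

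The main obstacle, and the reason the two statements carry different hypothesis lists, is the decoupling of \emph{uniqueness} from \emph{summability}. In the fully finitely generated setting both the finite-Rome property (uniqueness) and the locally-eventually-onto property (summability) hold simultaneously, but the recapitulation isolates them. I expect the delicate point to be arguing, in the first statement, that uniqueness of the constant slope model survives even without a summability guarantee; the resolution is precisely the observation that~\eqref{bijection} is a bijection onto \emph{summable} eigenvectors, so fewer-than-or-equal-to-one eigenvectors of any kind already caps the number of models. A secondary point worth checking is that $\Acc(\Post(g))$ finite is exactly the ``finite accumulation set'' hypothesis Theorem~\ref{th:rome} requires when $P=\Post(g)$, so no additional refinement of the partition is needed.
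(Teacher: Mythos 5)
Your write-up is essentially the paper's own proof: the author proves the recapitulation by rerunning the proof of Theorem~\ref{th:main}, substituting Theorem~\ref{th:rome}, Remark~\ref{rem:incompatible} and Theorem~\ref{th:path} for Corollary~\ref{cor:fgunique} in the first statement, and Lemma~\ref{lem:leoleo} and Proposition~\ref{prop:leosum} for Corollary~\ref{cor:fgsummable} in the second --- precisely the substitutions you make. Your two supporting observations are also correct and are exactly what makes the decoupling of hypotheses work: uniqueness needs no summability because the bijection~\eqref{bijection} ranges only over \emph{summable} eigenvectors, so having at most one eigenvector of any kind already caps the number of models; and $P=\Post(g)$ is a legitimate choice of partition, being taut, countable (a closed set with finitely many accumulation points is countable), and with $\Acc P$ finite as Theorem~\ref{th:rome} requires.

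The one genuine omission is the infinite-entropy case in the first statement. Unlike the second statement, the first does \emph{not} assume $h(g)<\infty$, yet both facts you import from \cite{BB} --- the bijection~\eqref{bijection} and the identity $h(g)=\log\lambda_{A(g,P)}$ --- are stated only for maps of finite entropy, so your argument as written does not cover $h(g)=\infty$. The paper dispatches this case at the end of the proof of Theorem~\ref{th:main}: if $h(g)=\infty$, no constant slope model can exist at all, because a constant slope map is Lipschitz and Lipschitz interval maps have finite entropy~\cite{KH}; hence ``at most one constant slope model'' holds vacuously there, and the slope assertion is empty. Adding that one sentence closes the gap; everything else matches the paper's argument.
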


\begin{proof}
The proof is essentially the same, but the first statement uses Theorem~\ref{th:rome}, Remark~\ref{rem:incompatible} and Theorem~\ref{th:path} in place of Corollary~\ref{cor:fgunique}, while the second statement uses Lemma~\ref{lem:leoleo} and Proposition~\ref{prop:leosum} in place of Corollary~\ref{cor:fgsummable}.
\end{proof}

\subsection{Example of a finitely generated map.}\strut\\

We begin by constructing an example of a finitely generated map. Let $f$ be the piecewise affine interval map whose graph connects the dots
\begin{equation*}
(0,1), \left(\tfrac13,0\right),\left(\tfrac12,\tfrac12\right),\text{ and }(1,0).
\end{equation*}
The following set $P$ is a taut Markov partition for $f$,
\begin{equation*}
P=\left\{0,\tfrac13,\tfrac12,1\right\} \cup \left\{\tfrac12\left(1-\tfrac{1}{3^n}\right)\,|\,\,n\in\mathbb{N}\right\}.
\end{equation*}
Let $g$ be the countably piecewise affine interval map whose graph connects the dots
\begin{equation*}
(0,1),  \left(\tfrac13,0\right),\left(\tfrac12,\tfrac12\right), \text{ and } \left(\tfrac12+\tfrac12\cdot \tfrac{1}{3^n}, \tfrac12 \right), \left(\tfrac12+\tfrac12\cdot\tfrac{2}{3^{n+1}},\tfrac12-\tfrac12\cdot\tfrac{1}{3^n}\right), \,\, n\in\mathbb{N}.
\end{equation*}
Figure~\ref{fig:fgexample} illustrates the process by which $g$ is finitely generated from $(f,P)$. The first frame shows $f$ and $P$. In the second frame we draw ``windows'' along the graph of $f$, i.e. rectangles of the form $\overline{I}\times f(\overline{I})$, $I\in\P$. The third frame shows $g$ -- it fills the same windows and has $P$ as a slack Markov partition.

\begin{figure}[htb!!]
\includegraphics[width=0.25\textwidth]{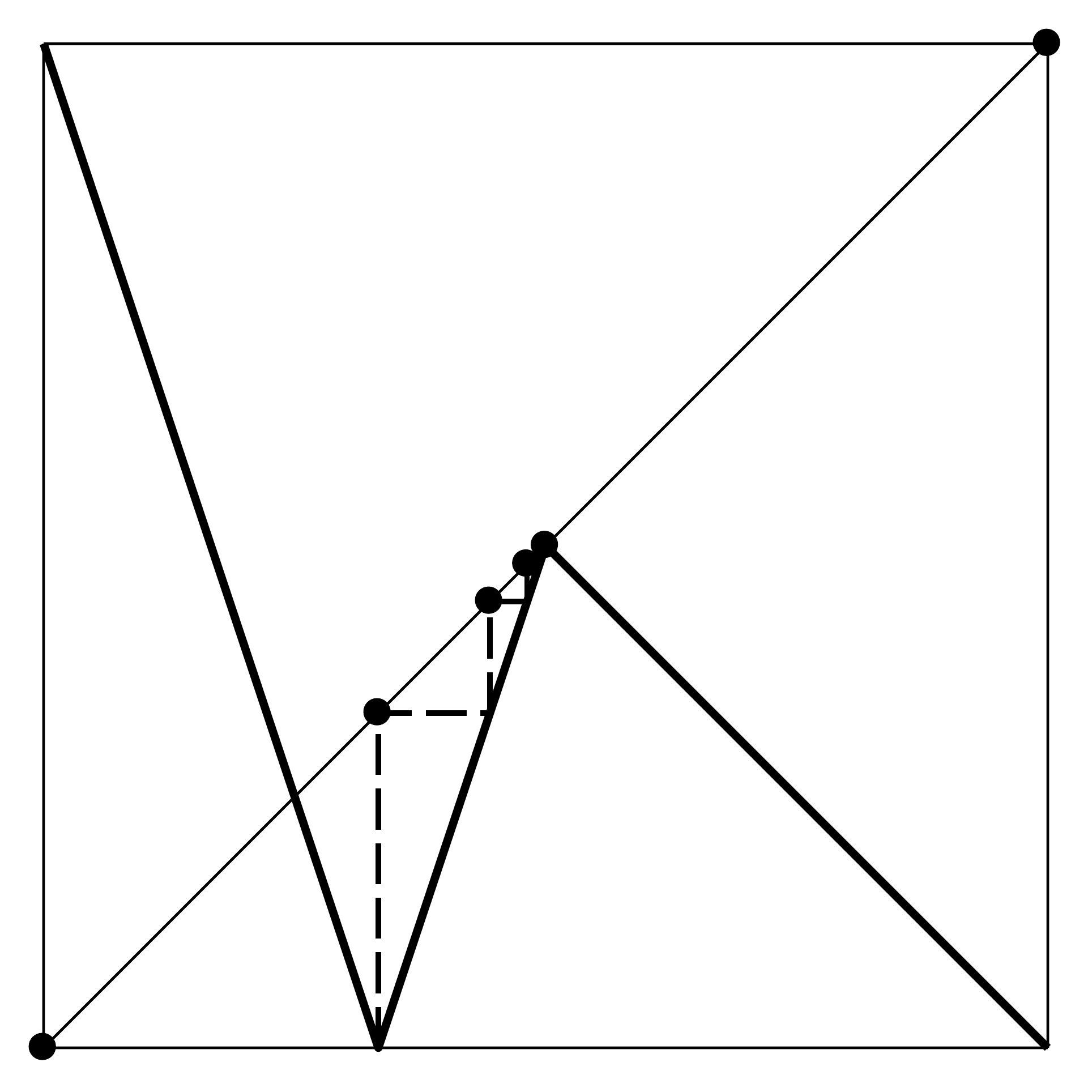}
\includegraphics[width=0.25\textwidth]{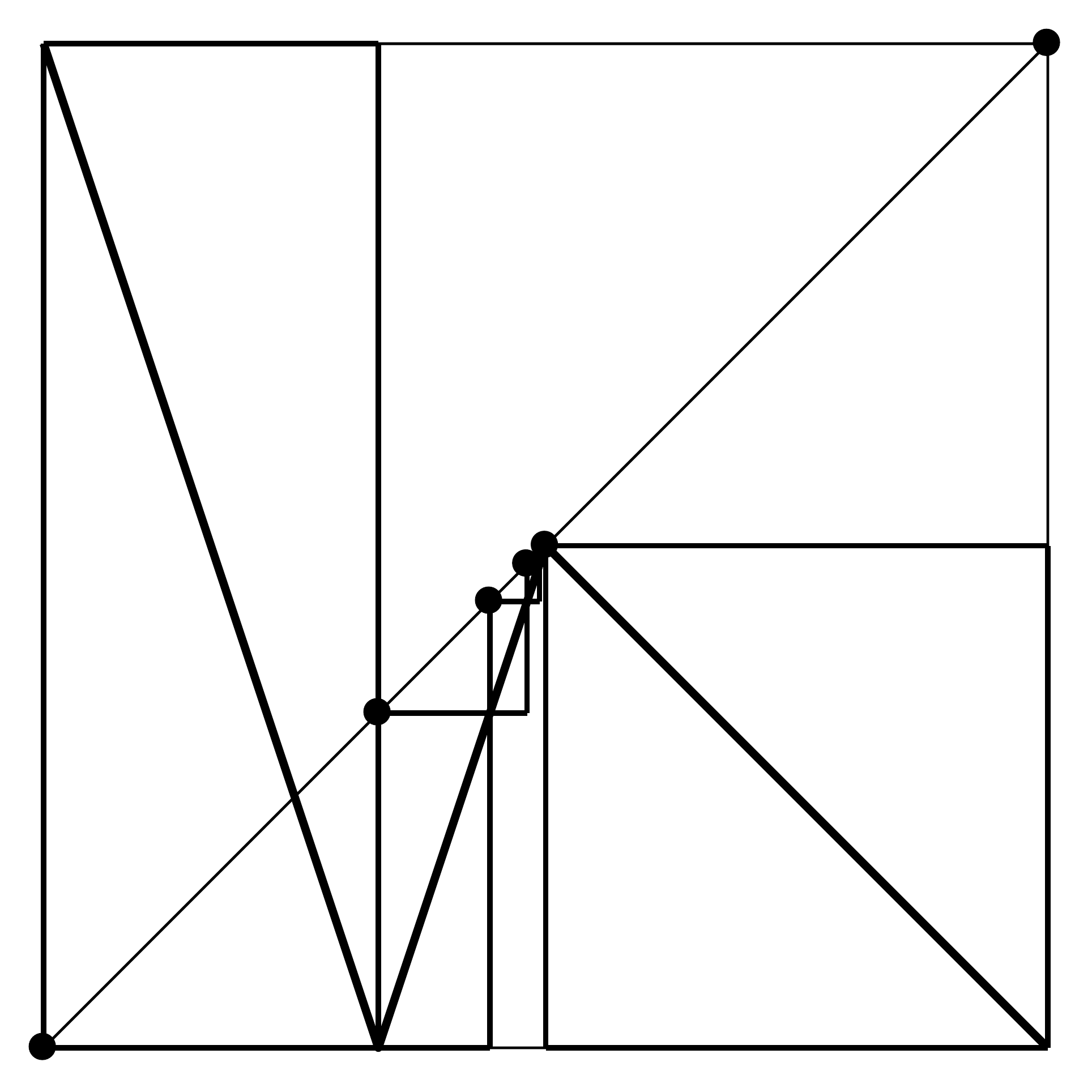}
\includegraphics[width=0.25\textwidth]{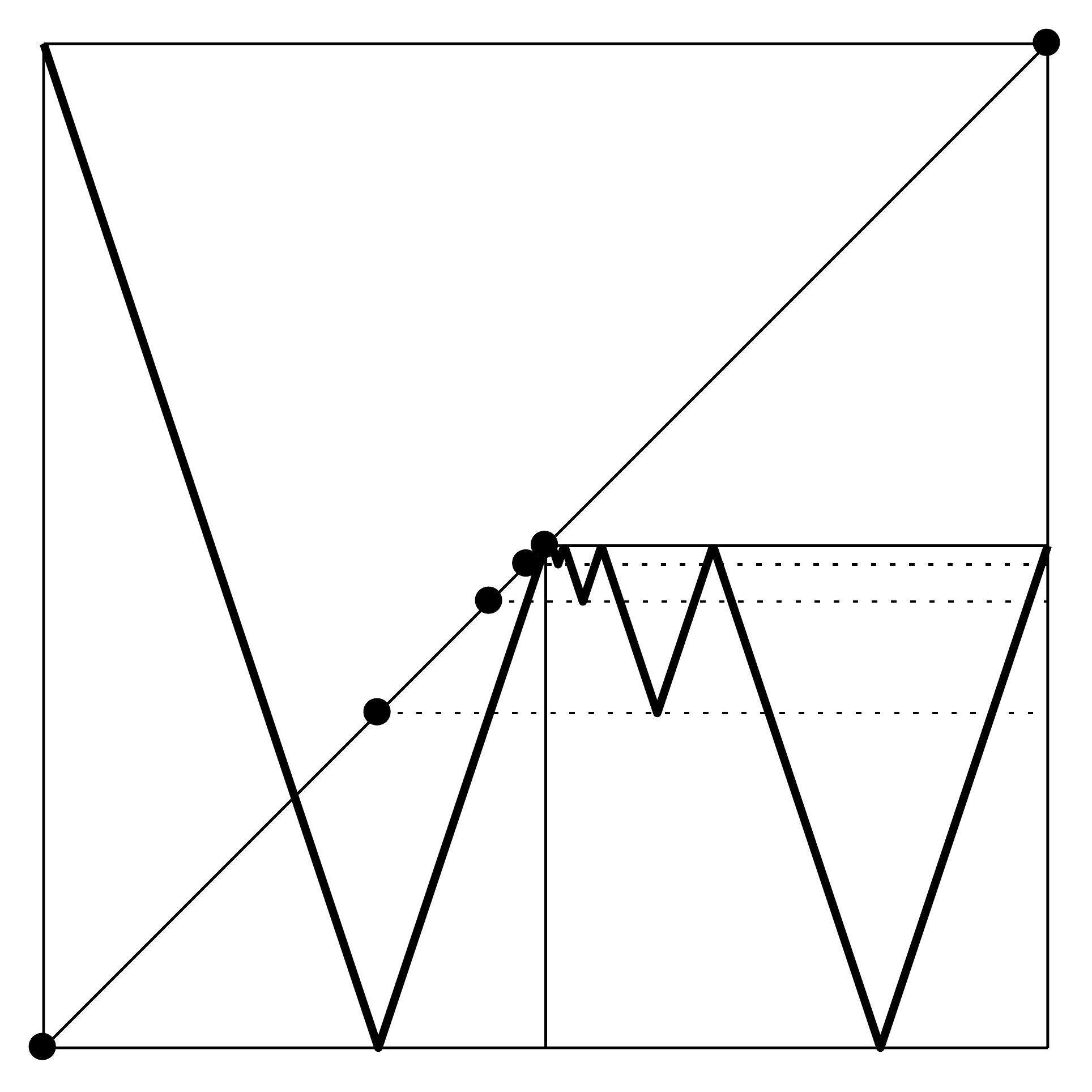}
\caption{The production of a finitely generated map}\label{fig:fgexample}
\end{figure}

We still need to verify that $f$ and $g$ are topologically mixing. We can do it using Theorem~\ref{th:mixing} below. For $f$ we use the taut Markov partition $\Crit(f)$, and for $g$ we use the taut Markov partition $P\cup\Crit(g)$. It is a simple matter to draw the transition graph $\Gamma(g,P\cup\Crit g)$ and verify that the vertex $(0,\frac13)$ is visited by every infinite path; this is our finite Rome.

What can we say about the dynamics of $g$? Observe that $g$ has constant slope $3$. Thus $g$ is its own constant slope model, and by Theorem~\ref{th:main} has no other constant slope models. Moreover, Theorem~\ref{th:main} shows that $g$ is Vere-Jones recurrent and has topological entropy $h(g)=\log 3$. Observe also that the fixed point $\tfrac12$, although strongly repelling for $f$, is only weakly repelling for $g$. This illustrates the sharpness of Theorem~\ref{th:rome}.

\subsection{Examples and nonexamples from the literature.}\strut\\

Table~\ref{tab:examples} compares and contrasts several countably monotone maps from the literature. Example~(a) is not locally eventually onto. Examples~(b) and~(c) both have fixed points which are not weakly repelling. It follows by Corollaries~\ref{cor:fgleo} and~\ref{cor:fgrep} that these maps are not finitely generated. The nonexistence / nonuniqueness of constant slope models for these maps gives a nice contrast to our theory.

Examples~(d) and~(e) are both generated by a global window perturbation of the tent map. By manipulating the number of laps in each window, one can control whether the resulting map is transient or recurrent \cite[\S7.2.1]{BB}.  The presence or absence of a constant slope model for these maps illustrates Theorem~\ref{th:main}.

\begin{remark}
The techniques used to construct examples~(d) and~(e) can be extended to show that both transient and recurrent maps are dense in the space of finitely generated maps with the $C^0$ topology.
\end{remark}

\begin{table}[htb!!]
\setlength{\tabcolsep}{0pt}
\renewcommand{\arraystretch}{1.1}
\caption{Examples and nonexamples of finitely generated maps}\label{tab:examples}
\vspace{-0.8em}
\begin{tabular}{Y{0.12} Y{0.14} Y{0.14} Y{0.14} Y{0.14} Y{0.14} Z}
\toprule
& (a) & (b) & (c) & (d) & (e) & \tabularnewline
Graph &
\includegraphics[width=0.12\textwidth]{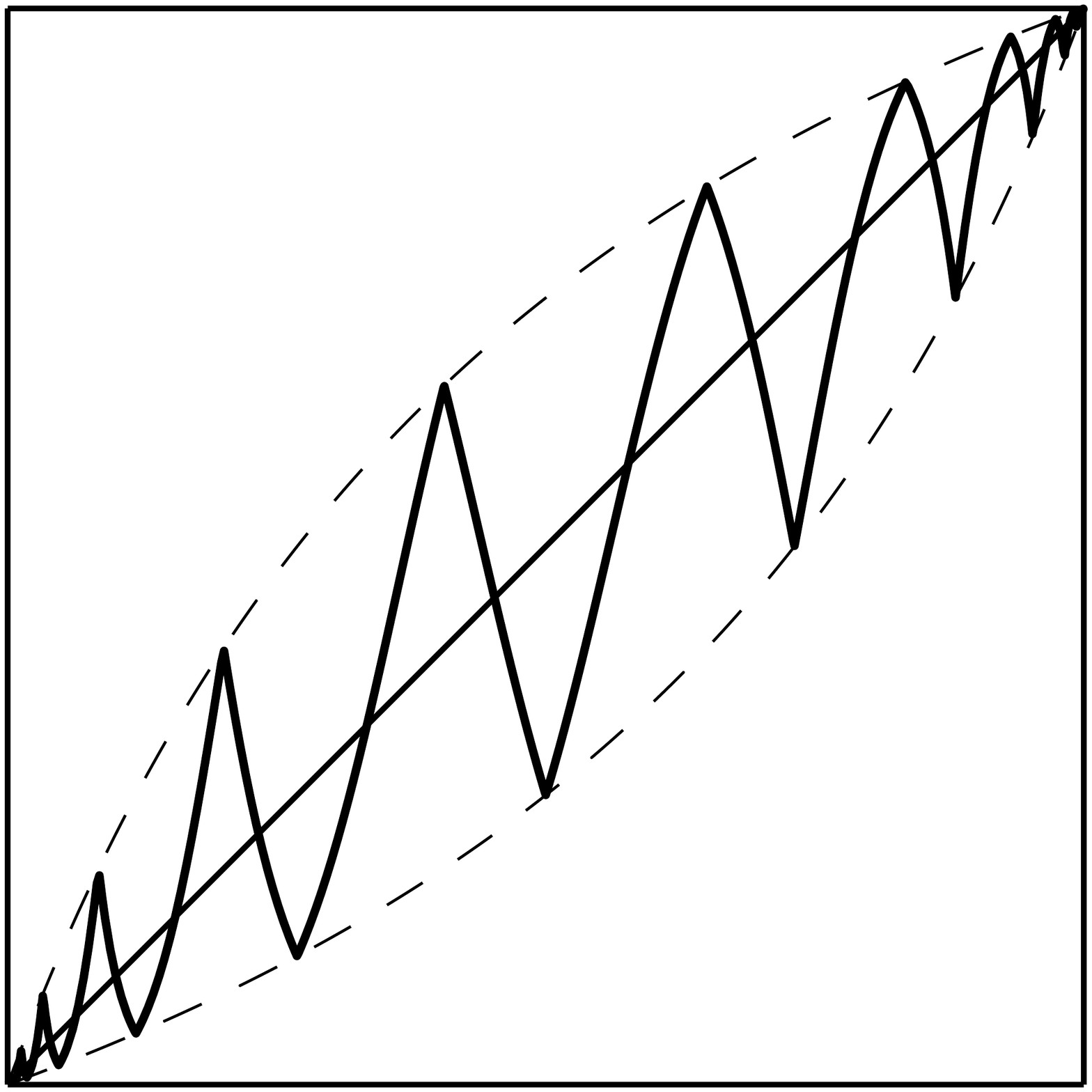} &
\includegraphics[width=0.12\textwidth]{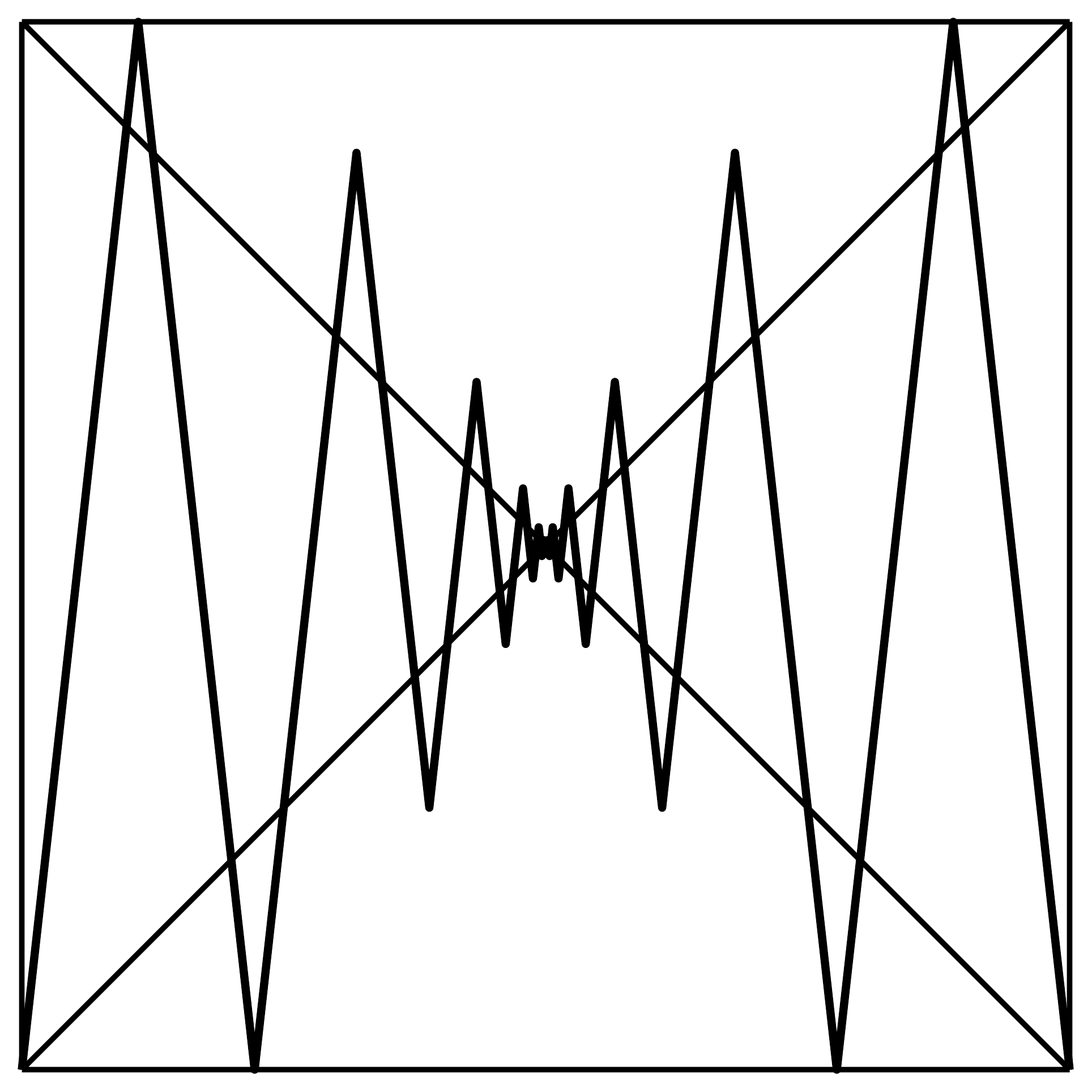} &
\includegraphics[width=0.12\textwidth]{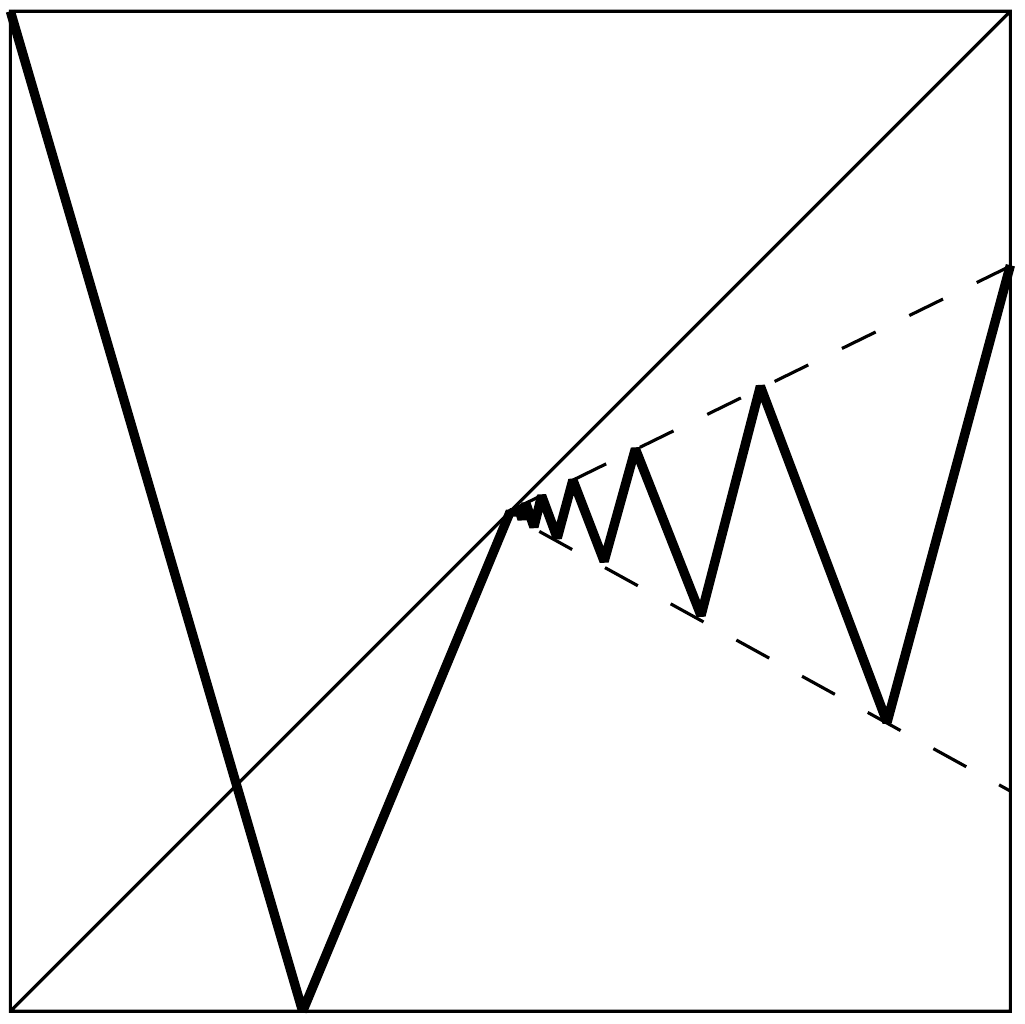} &
\includegraphics[width=0.12\textwidth]{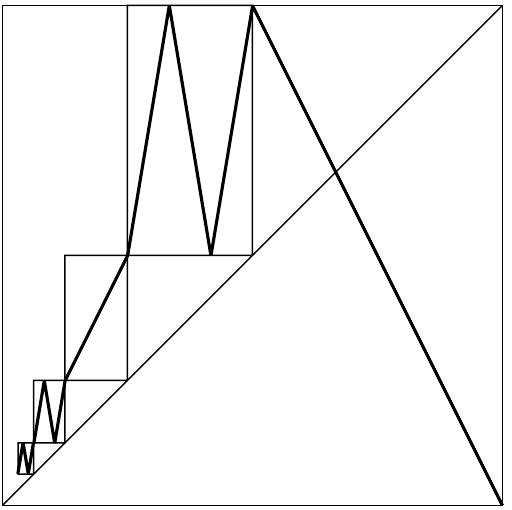} &
\includegraphics[width=0.12\textwidth]{fgtent.pdf} &
 \tabularnewline
Source & \cite[\S 7.1]{BB} & \cite[\S 7.2.4]{BB} & \cite[\S 8]{MR} & \cite[\S 7.2.1]{BB} & \cite[\S 7.2.1]{BB} & \tabularnewline
Vere-Jones classification & Recurrent & Transient & Recurrent & Recurrent & Transient & \tabularnewline
Finitely generated & No & No & No & Yes & Yes & \tabularnewline
Constant slope models & None & For all $\lambda\geq\exp h(f)$ & For all $\lambda\geq\exp h(f)$ & Unique, $\lambda=\exp h(f)$ & None &  \tabularnewline
\bottomrule
\end{tabular}
\end{table}



\section{How to Construct Finitely Generated Maps.}\label{sec:exa}

The hardest part of constructing finitely generated maps is meeting the topological mixing requirement. To that end we offer Theorem~\ref{th:mixing} and Corollary~\ref{cor:constructfg}.

\begin{definition}
Let $P$ be a taut Markov partition for $f$. We say that $f$ is \emph{$P$-affine} if $f|_I$ is affine for each $I\in\P$.
\end{definition}

\begin{theorem}\label{th:mixing}
Suppose $f\in C([0,1])$ is $P$-affine for some taut Markov partition $P$. If $A(f,P)$ is irreducible and aperiodic and if $\Gamma(f,P)$ contains a finite Rome $\R$, then $f$ is topologically mixing.
\end{theorem}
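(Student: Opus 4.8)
The plan is to verify topological mixing directly from its definition, reducing it to two ingredients: a purely combinatorial \emph{symbolic mixing} statement and a geometric \emph{covering lemma}. Since every nonempty open set contains an open interval and forward images of intervals are intervals, it suffices to show that for any nondegenerate interval $U$ and any nonempty open $V$ one has $f^t(U)\cap V\neq\emptyset$ for all large $t$. I would pick a point $v\in V\setminus P$ (possible since $P$ is countable) and let $L$ be the partition interval with $v\in L$. The goal then splits into (i) [covering] some forward image $f^n(U)$ contains $\overline K$ for \emph{some} partition interval $K$, and (ii) [symbolic mixing] for this $K$ one has $A^m_{KL}>0$ for all sufficiently large $m$. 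Granting these, the Markov covering property — an edge $K\to L$ means exactly $\overline L\subseteq f(\overline K)$, and inductively $A^m_{KL}>0$ gives $\overline L\subseteq f^m(\overline K)$ — yields $v\in\overline L\subseteq f^m(\overline K)\subseteq f^{n+m}(U)$, so $f^{n+m}(U)\cap V\ni v$ for all large $m$, which is exactly mixing.

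For (ii) I would argue as follows. The return set $S=\{n>0: A^n_{KK}>0\}$ is closed under addition, and aperiodicity says $\gcd S=1$, so by the numerical-semigroup theorem $S$ contains every sufficiently large integer; irreducibility provides some $\ell$ with $A^\ell_{KL}>0$, and then $A^{n+\ell}_{KL}\ge A^n_{KK}A^\ell_{KL}>0$ for all large $n$. The key point, which upgrades transitivity to mixing, is that this holds for \emph{all} large exponents, not merely infinitely many. This step uses neither the Rome nor $P$-affinity.

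The substance of the theorem is the covering lemma (i), and this is where I expect the main obstacle. I would argue by contradiction: suppose $U_n:=f^n(U)$ contains no closed partition interval for any $n$. Then $U_n$ meets at most one point of $P$ — two distinct partition points inside the open interval $U_n$ would sandwich a partition interval whose closure lies in $U_n$ — and that point cannot lie in $\Acc P$, since otherwise tiny intervals would accumulate inside $U_n$. Hence each $U_n$ lies in the closure of a single partition interval $I_n$, and $\overline{I_{n+1}}\subseteq f(\overline{I_n})$, so $I_0\to I_1\to\cdots$ is a genuine path in $\Gamma(f,P)$. Because $f$ is $P$-affine, each $f|_{\overline{I_n}}$ is an affine homeomorphism onto $f(\overline{I_n})$, so the relative length $r_n:=|U_n|/|I_n|\in(0,1]$ is nondecreasing, and it increases by the factor $|f(\overline{I_n})|/|I_{n+1}|>1$ precisely when $I_n$ covers more than the single interval $I_{n+1}$.

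To reach a contradiction I would feed in the finite Rome and irreducibility. By hypothesis the path $I_0\to I_1\to\cdots$ meets the finite set $\R$, and, applying this to every tail, it meets $\R$ infinitely often; hence some $R\in\R$ recurs. If the path is eventually periodic, then either its cycle consists only of single-cover vertices — forcing the forward-reachable set from $R$ to be that finite cycle, contradicting irreducibility — or the cycle contains a genuine branch vertex, whose fixed expansion factor recurs every period and drives $r_n\to\infty$, contradicting $r_n\le1$. The genuinely hard case is a non-eventually-periodic path that returns to $\R$ through ever-deeper excursions into shrinking partition intervals, where individual branch factors may tend to $1$ so that one cannot simply sum them. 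The plan to close this gap is to pass to the first-return map to $\R$: finiteness of $\R$ together with the irreducible, aperiodic return-word structure should furnish a single constant $\mu>1$ by which $r_n$ is multiplied at each return to $R$, forcing $r_n\to\infty$. (In the degenerate case of finitely many partition intervals, $f$ is piecewise monotone and one instead invokes the classical theory behind Lemma~\ref{lem:pmleo}.) This last estimate — converting the combinatorial spreading guaranteed by irreducibility and the finite Rome into uniform geometric length-growth, that is, excluding neutrally trapped orbits — is the crux of the whole argument.
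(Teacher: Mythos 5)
Your reduction to the covering statement (i) plus symbolic mixing (ii), the extraction of an itinerary path $I_0\to I_1\to\cdots$, and the observation that $P$-affinity makes the relative length $r_n=|U_n|/|I_n|$ nondecreasing all run parallel to the paper's proof (which phrases (i) as the absence of homtervals). But at what you correctly identify as the crux, your argument has a genuine gap: the hoped-for uniform constant $\mu>1$ multiplying $r_n$ at each return to $R$ does not follow from finiteness of $\R$ plus irreducibility and aperiodicity, and in general it does not exist. There are countably many distinct return loops at $R$, and the expansion factor of a loop $R=I_k\to\cdots\to I_l=R$ is $\prod_{m=k}^{l-1}|f(I_m)|/|I_{m+1}|$, a product of factors $\geq 1$ each of which can be arbitrarily close to $1$ when the loop dives into geometrically shrinking intervals near $\Acc P$ and each cover $f(I_m)$ exceeds $I_{m+1}$ by a vanishing margin. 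The combinatorial hypotheses constrain only the arrows of $\Gamma$, never the lengths, so no first-return analysis can manufacture uniform metric expansion; indeed $r_n\leq 1$ merely forces the factors to converge to $1$, which is self-consistent and produces no contradiction. A secondary slip: from ``$U_n$ meets at most one point of $P$'' you cannot conclude $U_n\subseteq\overline{I_n}$ for a single partition interval, since the isolated partition point may lie in the interior of $U_n$, in which case $U_n$ straddles two adjacent intervals and your itinerary is undefined; one must first pass to a side (the paper works with homtervals, whose images avoid $\Crit f$, to the same effect).

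The paper closes the crux by a different mechanism: it replaces ``growth'' by \emph{disjointness}. At each return time $n$ with $I_n=R$ it forms the full tail cylinder $U_n:=I_n\cap f^{-1}(I_{n+1})\cap f^{-2}(I_{n+2})\cap\cdots\subseteq R$, a nondegenerate interval containing $f^n(U)$. For return times $k<l$, these cylinders are either equal or disjoint, and the affine telescoping identity (your $r_n$ computation, rearranged over the loop) gives $|U_l|\geq|U_k|$ because each ratio $|I_{m+1}|/|f(I_m)|\leq 1$. Equality $U_k=U_l$ forces every ratio along the loop to equal $1$, i.e.\ $f(I_m)=I_{m+1}$ exactly, so no arrow leaves the loop, contradicting irreducibility and aperiodicity -- this is precisely your ``neutrally trapped'' periodic case, eliminated combinatorially rather than metrically. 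Hence the cylinders at the infinitely many return times are pairwise disjoint subintervals of $[0,1]$, each of length at least $|U_{n_0}|>0$, which is impossible. In short: monotone non-decrease of lengths (which you already have) together with pairwise disjointness of the tail cylinders suffices, and no uniform expansion factor is needed. Your step (ii) and the derivation of mixing from (i) are correct and essentially identical to the paper's opening paragraph.
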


\begin{lemma}\label{lem:romevisits}
If a countable directed graph $\Gamma$ contains a finite Rome $\R$, then for every path $I_0\to I_1\to \cdots$ in $\Gamma$ there is a vertex $R\in\R$ visited by this path infinitely often.
\end{lemma}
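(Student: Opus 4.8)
The plan is to leverage the full strength of the Rome definition, which constrains \emph{every} infinite path in $\Gamma$, not merely the one we are handed. The crucial observation is that any tail $I_k\to I_{k+1}\to\cdots$ of the given path is again an infinite path in $\Gamma$, and hence must itself visit $\R$. This single remark upgrades the ``visits $\R$ at least once'' guarantee supplied by the definition into ``visits $\R$ infinitely often,'' after which the finiteness of $\R$ does the rest.

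Concretely, I would first fix an arbitrary path $I_0\to I_1\to\cdots$ and an arbitrary index $k\in\mathbb{N}$. The suffix beginning at $I_k$ is an infinite path in $\Gamma$, so by the defining property of a Rome there exists some $m\geq k$ with $I_m\in\R$. Since $k$ was arbitrary, the set $\{m\in\mathbb{N}\,|\,I_m\in\R\}$ is unbounded, i.e.\ the path meets $\R$ at infinitely many indices.

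Finally, I would invoke the pigeonhole principle: the infinitely many indices at which the path lies in $\R$ are distributed among the finitely many vertices of $\R$, so at least one vertex $R\in\R$ must satisfy $I_m=R$ for infinitely many $m$. This $R$ is then visited infinitely often, which is exactly the assertion of the lemma.

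There is no genuine obstacle here, as the statement is essentially a repackaging of the definition of a Rome together with a counting argument; accordingly I expect the proof to be only a few lines. The one point deserving a moment's care is the verification that a tail of an infinite path is itself a legitimate infinite path in $\Gamma$ (so that the Rome property may legally be applied to it), but this is immediate, since every consecutive arrow $I_j\to I_{j+1}$ of the original path survives in each of its suffixes.
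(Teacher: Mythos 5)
Your proposal is correct and follows essentially the same route as the paper: both arguments observe that every tail of the path is itself an infinite path (the paper restarts after each visit to $\R$, you quantify over arbitrary suffixes, which is the same idea), deduce infinitely many visits to $\R$, and finish with the pigeonhole principle on the finite set $\R$. No gaps; your tail-verification remark is the same implicit step the paper takes.
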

\begin{proof}
If $n$ is a time when this path visits Rome $I_n\in\R$, then $I_{n+1}\to I_{n+2}\to \cdots$ is another path in $\Gamma$ which must also visit Rome.  Therefore our path visits Rome infinitely often, and since Rome is finite, our path must pass through one vertex $R\in\R$ infinitely often.
\end{proof}

\begin{proof}[Proof of Theorem~\ref{th:mixing}]
The essence of the proof is to show that $f$ has no homtervals, i.e. no non-degenerate interval $I$ such that $f^n(I)\cap\Crit f=\emptyset$ for all $n\in\mathbb{N}$.  Assuming no homtervals, it follows that any open set $U$ contains an open interval $U_0$ with some image $f^{n_0}(U_0)$ containing two critical points of $f$, and hence an entire interval $I\in\P$.  Given two more open sets $V,W$ we can find two more partition intervals which meet them $J\cap V\neq\emptyset$, $K\cap W\neq\emptyset$.  By irreducibility and aperiodicity we can find a natural number $n$ such that $A(f,P)^n_{IJ}>0$ and $A(f,P)^n_{IK}>0$ simultaneously.  Then $f^{n_0+n}(U)$ contains both $J$ and $K$ and hence meets both $V$ and $W$.  This proves that $f$ is mixing.

Now let us show that $f$ has no homtervals.  Assume to the contrary that $U$ is a homterval, and for each $n\in\mathbb{N}$ let $I_n\in\P$ be the partition interval containing $f^n(U)$. Then $I_0\to I_1\to \cdots$ is a path in $\Gamma$.  By Lemma~\ref{lem:romevisits}, it visits some vertex $R\in\R$ infinitely often.  Let $k<l$ be two arbitrary times when our path reaches this vertex, $I_k=I_l=R$.  Consider the corresponding sets 
\begin{align*}
U_k&:=I_k\cap f^{-1}(I_{k+1}) \cap f^{-2}(I_{k+2}) \cap \cdots\\
U_l&:=I_l\cap f^{-1}(I_{l+1}) \cap f^{-2}(I_{l+2}) \cap \cdots.
\end{align*}
These sets are homtervals contained in $R$; they are nondegenerate because $U_k$ contains $f^k(U)$ and $U_l$ contains $f^l(U)$. Also, $U_k$ and $U_l$ are either equal or disjoint, because if they are not equal, then taking $n$ minimal such that $I_{k+n}\neq I_{l+n}$ we have $f^n(U_k)\subseteq I_{k+n}$ and $f^n(U_l)\subseteq I_{l+n}$ disjoint.  The piecewise linearity of $f$ allows us to compare the lengths of $U_k$ and $U_l$,
\begin{equation}\label{lengths}
\begin{aligned}
|U_k| &= |I_k| \times \frac{|I_{k+1}|}{|f(I_k)|} \times \cdots \times \frac{|I_{l-1}|}{|f(I_{l-2})|} \times \frac{|I_l|}{|f(I_{l-1})|} \times \cdots = \\
&= \frac{|I_k|}{|f(I_{l-1})|} \times \frac{|I_{k+1}|}{|f(I_k)|} \times \cdots \times \frac{|I_{l-1}|}{|f(I_{l-2})|} \times |U_l|.
\end{aligned}
\end{equation}
Each of these fractions is less than or equal to $1$, since the interval in the denominator contains the interval in the numerator.  Therefore $|U_l|\geq|U_k|$.

Now we use the fact that $U_l$ and $U_k$ are either equal or disjoint.  If $U_l=U_k$, then each of the fractions in~\eqref{lengths} equals $1$, so the intervals in the numerators and denominators are equal.  Thus we have a loop $I_k \to I_{k+1} \to \cdots \to I_{k+l-1} \to I_l=I_k$ in $\Gamma(f,P)$ with no arrows pointing out of this loop, contradicting the irreducibility and aperiodicity of $A(f,P)$.  We are forced to conclude that $U_k$ and $U_l$ are disjoint homtervals with $U_l$ bigger than $U_k$.

Our construction of $U_k$ and $U_l$ used the choice of two numbers $k<l$ with $I_k=I_l=R$.  But our path reaches $R$ infinitely many times.  Thus we find infinitely many indices $n$ with $I_n=R$, resulting in infinitely many homtervals $U_n$ which are pairwise disjoint and have lengths bounded below by the length $|U_{n_0}|$, where $n_0$ is the minimal index with $I_{n_0}=R$.  But $[0,1]$ cannot contain an infinite collection of pairwise disjoint intervals with lengths bounded away from zero.  We are forced to conclude that there is no homterval $U$ for $f$.
\end{proof}

In order to easily apply Theorem~\ref{th:mixing} in the production of finitely generated maps, we offer one more observation.

\begin{lemma}\label{lem:slacktotaut}
Consider a map $g$ with a slack Markov partition $P$ and finitely many critical points in each partition interval $I\in\P$. Let $Q=P\cup\Crit(g)$.
\begin{itemize}
\item If $A(g,P)$ is irreducible and aperiodic, then so is $A(g,Q)$.
\item If $\Gamma(g,P)$ contains a finite Rome, then so does $\Gamma(g,Q)$.
\end{itemize}
\end{lemma}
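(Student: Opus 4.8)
The plan is to treat $Q=P\cup\Crit(g)$ as a refinement of $P$ and track how arrows transform under this refinement. First I would record two structural facts. Since $g(P)\subseteq P$ by invariance and $g(\Crit(g))\subseteq P$ because $P$ is slack, we get $g(Q)\subseteq P$; in particular $Q$ is a taut Markov partition for $g$, and the endpoints of any $Q$-interval map into $P$, so the image $g(\overline{I'})$ of a $Q$-interval is a closed interval with endpoints in $P$, i.e.\ a union of closures of $\P$-intervals. As $g$ is strictly monotone on each $Q$-interval (its interior meets no critical point), this yields a clean description of arrows in the refined graph: for $Q$-intervals $I',J'$ one has $I'\to J'$ in $\Gamma(g,Q)$ if and only if $g(\overline{I'})\supseteq\overline{\pi(J')}$, where $\pi\colon\Q\to\P$ sends each $Q$-interval to the $\P$-interval containing it. Every fiber $\pi^{-1}(I)$ is finite, since $I$ contains only finitely many critical points.

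From this description two properties fall out. (A) \emph{Projection of arrows:} if $I'\to J'$ in $\Gamma(g,Q)$ then $g(\overline{\pi(I')})\supseteq g(\overline{I'})\supseteq\overline{\pi(J')}$, so $\pi(I')\to\pi(J')$ in $\Gamma(g,P)$; thus $\pi$ carries every path of $\Gamma(g,Q)$ to a path of $\Gamma(g,P)$. (B) \emph{Lifting of arrows:} if $I\to J$ in $\Gamma(g,P)$, then writing $\overline{I}$ as the union of its finitely many $Q$-subinterval closures $\overline{I^{(1)}},\dots,\overline{I^{(m)}}$ we have $\overline{J}\subseteq g(\overline{I})=\bigcup_s g(\overline{I^{(s)}})$; an interior point of $J$ lies in some $g(\overline{I^{(s)}})$, which, being a union of $\P$-closures, must then contain all of $\overline{J}$. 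Hence some lap $I^{(s)}$ satisfies $I^{(s)}\to J'$ for \emph{every} $Q$-interval $J'\subseteq J$ simultaneously.

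The Rome statement is then immediate from (A): if $\R\subseteq\P$ is a finite Rome, set $\R'=\pi^{-1}(\R)$, which is finite because the fibers are finite; any path of $\Gamma(g,Q)$ projects by (A) to a path of $\Gamma(g,P)$, which meets $\R$, so the corresponding original vertex lies in $\R'$. For irreducibility I would fix a $Q$-interval $I'$ and study its forward reach. Using the ``all of $J$ at once'' form of the arrow description, the set of $Q$-intervals reachable from $I'$ is saturated---a union of full fibers---hence determined by a set $C\subseteq\P$ of coarse vertices. Property (B) shows $C$ is forward-invariant under coarse arrows, and $C\neq\varnothing$ since $I'$ has an outgoing arrow (its image is a nondegenerate union of $\P$-closures). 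Irreducibility of $A(g,P)$ then forces $C=\P$, so $I'$ reaches every $Q$-interval, and $A(g,Q)$ is irreducible.

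The main obstacle is aperiodicity, where a naive ``lift a pair of coprime loops'' argument stumbles because lifted walks need not close up. Instead I would argue through the cyclic structure of the now-irreducible graph $\Gamma(g,Q)$: let $d$ be its period and $h\colon\Q\to\mathbb{Z}/d$ the associated height function, characterized by $h(w)=h(v)+1$ whenever $v\to w$. The key point is that $h$ descends through $\pi$. Indeed, any two $Q$-intervals over the same $\P$-interval $I$ share a common predecessor: choosing (by irreducibility) a coarse arrow $K\to I$ and applying (B), some lap over $K$ maps onto the entire fiber $\pi^{-1}(I)$, forcing all vertices of that fiber to have equal height. Thus $h=\bar h\circ\pi$, and (B) shows every coarse arrow raises $\bar h$ by $1$ modulo $d$. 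Consequently every cycle of $\Gamma(g,P)$ has length divisible by $d$, so $d$ divides the coarse period, which is $1$ by hypothesis. Therefore $d=1$ and $A(g,Q)$ is aperiodic.
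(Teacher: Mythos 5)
Your proof is correct, and while the Rome half coincides with the paper's argument, your treatment of irreducibility and aperiodicity takes a genuinely different route. The shared core: the paper also works with the projection $\pi$ (written $\widehat{\cdot}$ there), proves exactly your fact (A) to push fine paths down to coarse paths, takes $\R_Q=\pi^{-1}(\R_P)$ for the Rome, and its key observation is the same as your structural fact underlying (B) -- the endpoints of a $Q$-interval lie in $P\cup\Crit(g)$, hence map into $P$, so the $g$-image of a fine interval swallows whole coarse intervals. The divergence is in what is done with this observation. The paper concludes in one stroke: given $I,K\in\Q$, pick $J\in\Q$ with $g(I)\supset J$, note $g(I)\supset\widehat{J}$, and invoke the standard eventual-positivity property of irreducible aperiodic matrices to get $g^n(\widehat{J})\supset\widehat{K}$ for all $n\geq N$, whence $A(g,Q)^n_{IK}>0$ for all $n\geq N+1$; this yields irreducibility and aperiodicity of $A(g,Q)$ simultaneously. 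You instead prove irreducibility by a saturation argument (arrows in $\Gamma(g,Q)$ depend only on the target's coarse interval, so the reachable set is a union of full fibers whose coarse shadow is nonempty and forward-invariant, hence everything) and handle aperiodicity separately via the cyclic height function, showing it descends through $\pi$ because each fiber has a common predecessor by (B). Your route is longer but more self-contained -- it avoids the numerical-semigroup fact that irreducible aperiodic matrices have $A^n_{IJ}>0$ for all sufficiently large $n$ -- and your fiber-saturation description of $\Gamma(g,Q)$ over $\Gamma(g,P)$ is a finer structural statement than the paper records, of possible independent use. One cosmetic imprecision: when $P$ has accumulation points, a closed interval with endpoints in $P$ need not literally be a union of closures of $\P$-intervals (two-sided accumulation points of $P$ inside it belong to no such closure); but the property you actually deploy at every step -- the endpoints lie in $P$, so the interval cannot terminate inside a partition interval -- is exactly right, so nothing breaks.
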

\begin{proof}
Throughout the proof we use the map $\widehat{\cdot}:\Q\to\P$ which assigns to a partition interval $I\in\Q$ the unique partition interval $\widehat{I}\in\P$ which contains it.

Suppose $A(g,P)$ is irreducible and aperiodic. Fix two intervals $I,K\in\Q$. Find some $J\in\Q$ with $g(I)\supset J$. Notice that each endpoint of $I$ is either an endpoint of $\widehat{I}$ or a critical point of $g$, and therefore both of these endpoints have images in $P$. It follows that $g(I)\supset \widehat{J}$. By the irreducibility and aperiodicity of $A(g,P)$ there exists $N$ such that for all $n\geq N$, $g^n(\widehat{J})\supset\widehat{K}$. But then for all $n\geq N+1$, $g^n(I)\supset K$. This shows that $g(f,Q)$ is irreducible and aperiodic.

Now suppose $\R_P$ is a finite Rome in $\Gamma(g,P)$, and define $\R_Q$ to be the collection of all $I\in\Q$ such that $\widehat{I}\in\R_\P$. The hypotheses of the lemma give finiteness of $\R_Q$. If there is an arrow $I\to J$ in $\Gamma(g,Q)$, then $g(I)\supset J$, so $g(\widehat{I})\cap\widehat{J}\neq\emptyset$, which by the Markov property of $P$ implies that $g(\widehat{I})\supset\widehat{J}$ and there is at least one arrow $\widehat{I}\to\widehat{J}$ in $\Gamma(g,P)$. Thus, every path $I_0\to I_1\to \cdots$ in $\Gamma(g,Q)$ corresponds to at least one path $\widehat{I}_0\to \widehat{I}_1\to \cdots$ in $\Gamma(g,P)$. From this latter path find $n$ such that $\widehat{I}_n\in\R_P$. Then $I_n\in\R_Q$. Thus $\R_Q$ is a finite Rome.
\end{proof}

\begin{corollary}\label{cor:constructfg}
Let $g$ be a global window perturbation of a pair $(f,P)$ with $\Crit(f)$ finite, $\Acc P$ finite, and $f$ topologically mixing. Let $Q=P\cup\Crit(g)$. If $\Crit(g)\cap I$ is finite for each $I\in\P$ and if $g$ is $Q$-affine, then $g$ is finitely generated.
\end{corollary}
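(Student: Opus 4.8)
The plan is to verify directly the four conditions packaged into Definition~\ref{def:fg}. Three of them are handed to us by hypothesis: $g$ is a global window perturbation of $(f,P)$, the map $f$ has finite critical set, and $P$ has finite accumulation set. The only substantive point is to prove that $g$ is topologically mixing, and for this I would apply Theorem~\ref{th:mixing} to the pair $(g,Q)$.

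To set up Theorem~\ref{th:mixing} I first check that $Q=P\cup\Crit(g)$ is a taut Markov partition for $g$. It contains $\Crit(g)$ by construction and is closed as a union of two closed sets. It is countable, because $P$ is countable and $\Crit(g)$ meets each of the countably many intervals $I\in\P$ in a finite set (together with the at most countably many points of $\Crit(g)$ lying on $P$ itself). It is invariant because $P$ is a slack partition for $g$, so $g(\Crit(g))\subseteq P$ and $g(P)\subseteq P$, whence $g(Q)\subseteq P\subseteq Q$. Since $g$ is $Q$-affine by hypothesis, it then remains only to produce the combinatorial inputs of Theorem~\ref{th:mixing}: irreducibility and aperiodicity of $A(g,Q)$, and a finite Rome in $\Gamma(g,Q)$.

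I would obtain these by a two-stage transfer. First, at the level of the slack partition $P$: because $f$ is topologically mixing, $A(f,P)$ is irreducible and aperiodic; and because $f$ is piecewise monotone (finite critical set) and topologically mixing, Lemma~\ref{lem:perrepel} makes all its periodic orbits weakly repelling, so Theorem~\ref{th:rome}, applied with the finite accumulation set $\Acc P$, furnishes a finite Rome in $\Gamma(f,P)$. By Lemma~\ref{lem:same} the graphs $\Gamma(f,P)$ and $\Gamma(g,P)$ have exactly the same paths, and irreducibility, aperiodicity, and the Rome property are all properties of the arrow pattern alone; hence $A(g,P)$ is irreducible and aperiodic and $\Gamma(g,P)$ inherits the same finite Rome. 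Second, I would lift these properties from $P$ to the taut refinement $Q$ using Lemma~\ref{lem:slacktotaut}, whose hypotheses are met precisely because $P$ is a slack partition for $g$ with $\Crit(g)\cap I$ finite for each $I\in\P$.

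With $A(g,Q)$ irreducible and aperiodic, a finite Rome in $\Gamma(g,Q)$, and $g$ being $Q$-affine, Theorem~\ref{th:mixing} yields that $g$ is topologically mixing, completing the verification that $g$ is finitely generated. I do not expect any single step to be a genuine obstacle — the argument is essentially a bookkeeping assembly of earlier results — but the one place demanding care is the combinatorial transfer, where I must keep the two partitions $P$ and $Q$ distinct and justify that irreducibility, aperiodicity, and the existence of a finite Rome depend only on the shared arrow pattern (Lemma~\ref{lem:same}) before invoking Lemma~\ref{lem:slacktotaut} to pass to the taut refinement $Q$ on which $g$ is affine.
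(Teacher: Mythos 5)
Your proposal is correct and follows essentially the same route as the paper's proof: mixing of $f$ gives irreducibility and aperiodicity of $A(f,P)$, Lemma~\ref{lem:perrepel} together with Theorem~\ref{th:rome} gives the finite Rome in $\Gamma(f,P)$, Lemma~\ref{lem:same} transfers these to $(g,P)$, Lemma~\ref{lem:slacktotaut} lifts them to $Q$, and Theorem~\ref{th:mixing} applied to $(g,Q)$ yields mixing of $g$. Your extra verification that $Q$ is a countable, closed, invariant taut partition is a detail the paper leaves implicit, and it is carried out correctly.
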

\begin{proof}
Since $f$ is topologically mixing, $A(f,P)$ is irreducible and aperiodic. By Lemma~\ref{lem:perrepel} and Theorem~\ref{th:rome}, $\Gamma(f,P)$ contains a finite Rome. By Lemma~\ref{lem:same}, $A(g,P)$ is irreducible and aperiodic and $\Gamma(g,P)$ contains a finite Rome. By Lemma~\ref{lem:slacktotaut}, $A(g,Q)$ is irreducible and aperiodic and $\Gamma(g,Q)$ contains a finite Rome. Now apply Theorem~\ref{th:mixing} to the map $g$ and the partition $Q$.
\end{proof}


\begin{thebibliography}{99}

\bibitem{AM}
Ll.~Alsed{\`a} and M.~Misiurewicz,
Semiconjugacy to a map of a constant slope.
\emph{Discrete Contin. Dyn. Syst. Ser. B} \textbf{20}~(2015), no. 10, 3403--3413.

\bibitem{B}
J.~Bobok,
Semiconjugacy to a map of a constant slope.
\emph{Studia Math.} \textbf{208}~(2012), no. 3, 213--228.

\bibitem{BB}
J.~Bobok and H.~Bruin,
Constant slope maps and the Vere-Jones classification.
\emph{Entropy} \textbf{18}~(2016), no. 6, Paper No. 234, 27 pp.

\bibitem{BS}
J.~Bobok and M.~Soukenka,
On piecewise affine interval maps with countably many laps.
\emph{Discrete Cont. Dyn. Syst.} \textbf{31}~(2011), no. 3, 753--762.

\bibitem{F}
W.~Feller,
\emph{An introduction to probability theory and its applications.}
3rd Ed. Vol. 1.
John Wiley \& Sons, Inc., New York, 1950.

\bibitem{H}
T.E.~Harris,
Transient Markov chains with stationary measures.
\emph{Proc. Amer. Math. Soc.} \textbf{8}~(1957), 937--942.

\bibitem{KH}
A.~Katok and B.~Hasselblatt.
\emph{Introduction to the modern theory of dynamical systems}.
Cambridge University Press, Cambridge, UK, 1995.

\bibitem{K}
B.~Kitchens,
\emph{Symbolic dynamics. One-sided, two-sided, and countable state Markov shifts.}
Universitext. Springer-Verlag, Berlin, 1988.

\bibitem{MR}
M.~Misiurewicz and S.~Roth,
No semiconjugacy to a map of constant slope.
\emph{Ergodic Theory Dynam. Systems} \textbf{36}~(2016), no. 3, 875--889.

\bibitem{P}
W.~Parry,
Symbolic dynamics and transformations of the unit interval.
\emph{Trans. Amer. Math. Soc.} \textbf{122}~(1966), 368--378.

\bibitem{Ru}
S.~Ruette,
\emph{Chaos on the interval}.
University Lecture Series, 67.
American Mathematical Society, Providence, RI, 2017.

\bibitem{VJ}
D.~Vere-Jones
Ergodic properties of nonnegative matrices--I.
\emph{Pacific J. Math.} \textbf{22}~(1967), no. 2, 361--386.


\end{thebibliography}
\end{document}